\newcommand{\blind}{1}
\newtheorem{lem}{Lemma}
\newtheorem{cor}{Corollary}
\newtheorem{prp}{Proposition}
\newtheorem{rmk}{Remark}
\newtheorem{thm}{Theorem}
\newtheorem{exm}{Example}
\newtheorem{dfn}{Definition}
\DeclareMathOperator*{\argmax}{arg\,max}
\DeclareMathOperator*{\argmin}{arg\,min}
\newcommand{\E}{\mathrm{E}}
\titlespacing*{\section}{0pt}{2.0ex plus .4ex minus .2ex}{1.0ex plus .2ex}
\titlespacing*{\subsection}{0pt}{1.5ex plus .3ex minus .2ex}{0.7ex plus .2ex}
\titlespacing*{\subsubsection}{0pt}{1.2ex plus .3ex minus .2ex}{0.5ex plus .2ex}
\begin{document}

\def\spacingset#1{\renewcommand{\baselinestretch}%
{#1}\small\normalsize} \spacingset{1}
\setlength{\abovedisplayskip}{5pt}
\setlength{\belowdisplayskip}{5pt}
\setlength{\abovedisplayshortskip}{3pt}
\setlength{\belowdisplayshortskip}{3pt}


\if1\blind
{
  \title{\bf{Fuzzy Prediction Sets: Conformal Prediction with E-values}
 }
  \author{Nick W. Koning \& Sam van Meer\hspace{.2cm}\\
    Econometric Institute, Erasmus University Rotterdam, the Netherlands}
  \maketitle
} \fi

\if0\blind
{
  \bigskip
  \bigskip
  \bigskip
  \begin{center}
    {\LARGE\bf Fuzzy Prediction Sets: Conformal Prediction with E-values}
\end{center}
  \medskip
} \fi
\vspace{-14mm}
\bigskip

\begin{abstract}
    Prediction sets offer a binary inclusion/exclusion for each element at the same fixed confidence level.
    We generalize to fuzzy prediction sets, which exclude elements at their own data-driven confidence level.
    Our key insight is that a fuzzy prediction set \emph{is} an e-value, capturing precisely what e-values bring to predictive inference.
    Fuzzy prediction sets inherit the merging properties of their e-value, offer richer guarantees to decision-makers.
    We also show in what sense optimal e-values give rise to optimal (fuzzy) prediction sets.
    We apply our results to conformal prediction, deriving optimal fuzzy conformal prediction sets, and characterizing in what sense classical conformal prediction is optimal.
\end{abstract}


\noindent%
\vspace{-.3cm}
    \section{Introduction}
        \subsection{Traditional prediction sets}
            Prediction sets, with conformal prediction sets at the front, have been one of the main success stories in bringing uncertainty quantification to machine learning predictions.
            
            Suppose we have $n$ exchangeable observations $Z^n = (Z_1, \dots, Z_n)$.
            Abstracting away from covariates, the goal of conformal prediction is to use these $n$ observations to construct a prediction set $C_\alpha^{Z^n}$ that will cover the next observation with a probability of at least $1 - \alpha$,
            \begin{align}\label{eq:coverage_guarantee}
                \Pr(Z_{n+1} \in C_\alpha^{Z^n}) \geq 1 - \alpha,
            \end{align}
            assuming $Z^{n+1} = (Z_1, \dots, Z_{n+1})$ is exchangeable.

            To construct such a prediction set, the idea is to plug-in hypothetical values $z$ for $Z_{n+1}$ and test whether $(Z_1, \dots, Z_n, z)$ is exchangeable at level $\alpha$.
            Repeating this exercise for each plug-in value $z$ in the sample space, the prediction set is formed by collecting the values $z$ for which the hypothesis is not rejected at level $\alpha$:
            \begin{align*}
                C_\alpha^{Z^n}
                    = \{z : \textnormal{not reject } (Z_1, \dots, Z_n, z) \textnormal{ is exchangeable}\}.
            \end{align*}
            
            \begin{figure}[htb!]
                \centering
                \includegraphics[width=6.5cm]{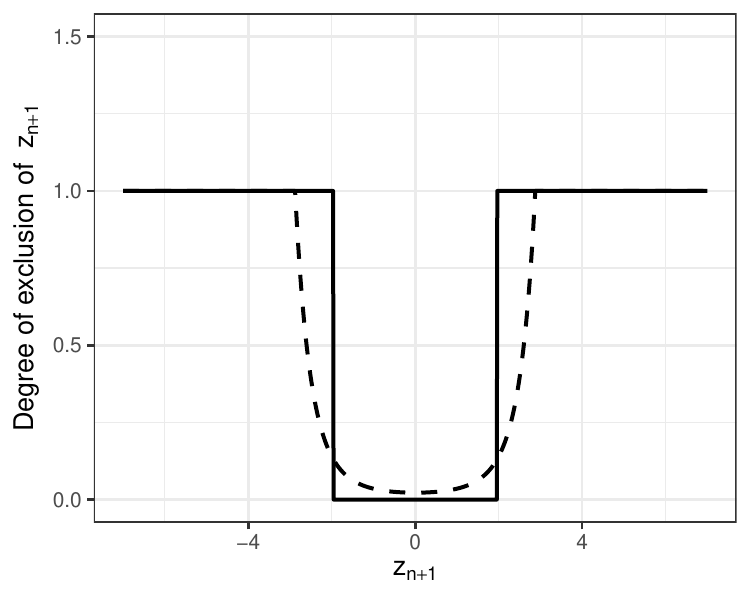}
                \caption{A traditional and fuzzy prediction set. The solid line represents a traditional level $\alpha = 0.05$ prediction set: the points $z_{n+1}$ at which this line is zero form the prediction set, and the points at which it equals $1$ its complement. The dashed line represents a fuzzy $[0, 1]$-valued prediction set, offering a degree of exclusion at each point.}
                \label{fig:fuzzy_confidence_set}
            \end{figure}
            
        \subsection{Fuzzy prediction sets}
            A downside is that a prediction set is binary: a sample point $z$ either falls inside the set or not.
            We overcome this by generalizing to \emph{fuzzy prediction sets}.

            A traditional prediction set may be reinterpreted as a function $C_\alpha^{Z^n} : \mathcal{Z} \to \{0, 1\}$, which outputs an inclusion (0) or exclusion (1) for each point $z \in \mathcal{Z}$.
            A fuzzy prediction set $\widetilde{C}_\alpha^{Z^n} : \mathcal{Z} \to [0, 1]$ generalizes beyond the binary inclusion/exclusion to a \emph{degree of exclusion}.
            We illustrate this in Figure \ref{fig:fuzzy_confidence_set}, where the solid line represents a traditional prediction set and the dashed line a fuzzy prediction set.

            For a fuzzy prediction set, the traditional coverage guarantee \eqref{eq:coverage_guarantee} generalizes to the guarantee that the degree of exclusion of $Z_{n+1}$ is bounded in expectation by $\alpha$:
            \begin{align}\label{eq:fuzzy_guarantee}
                \E[\widetilde{C}_\alpha^{Z^n}(Z_{n+1})] \leq \alpha.
            \end{align}
            For a binary test, this collapses to the classical guarantee \eqref{eq:coverage_guarantee}, as $C_\alpha^{Z^n}(z_{n+1}) = \mathbb{I}\{z_{n+1} \not\in C_\alpha^{Z^n}\}$.

        \subsection{E-values and fuzzy prediction sets}
            We connect fuzzy prediction sets to e-values \citep{grunwald2023safe, vovk2021values, howard2021time, shafer2021testing, koning2025continuoustestingunifyingtests} by merging the degree of exclusion into the significance level, allowing different points $z$ to be excluded at different levels.
            
            To facilitate this, we rescale from $[0, 1]$ to $[0, 1/\alpha]$ by dividing the prediction set by $\alpha$:
            \begin{align*}
                \mathcal{E}^{Z^n}(z)
                    = C_\alpha^{Z^n}(z) / \alpha.
            \end{align*}
            Due to the rescaling, $\mathcal{E}^{Z^n}(z) = 1/\alpha$ now corresponds to a full exclusion at level $\alpha$.
            More generally, we show that the value of a fuzzy prediction set $\mathcal{E}^{Z^n}(z)$ at any point $z$ may be directly interpreted as an exclusion at the data-driven level $1 / \mathcal{E}^{Z^n}(z)$ under an extension of the Type I error to data-dependent levels \citep{grunwald2022beyond, koning2024post}.
            Because of the rescaling, the guarantee \eqref{eq:fuzzy_guarantee} must be reformulated to $\E[\mathcal{E}^{Z^n}(Z_{n+1})] \leq 1$, which also enables $[0, \infty]$-valued fuzzy prediction sets.

            We illustrate fuzzy prediction sets on this \emph{evidence scale} in Figure \ref{fig:fuzzy_confidence_set_rescaled}, which is a rescaled version of Figure \ref{fig:fuzzy_confidence_set} that also includes a $[0, \infty]$-valued fuzzy prediction set.
            Here, the traditional prediction set only offers exclusions at level $1/20 = 0.05$, whereas the fuzzy prediction sets offer exclusions at various significance levels for different points $z$ in the sample space.

            The key insight that underlies this is that a fuzzy prediction set $\mathcal{E}^{Z^n}$, is equivalent to an e-value $\mathcal{E}(Z^n, Z_{n+1}) := \mathcal{E}^{Z^n}(Z_{n+1})$.
            Moreover, we show that the fuzzy prediction set $\mathcal{E}^{Z^n}$ is valid for a model $\mathcal{P}$ if and only if this e-value $\mathcal{E}$ is valid for the hypothesis $\mathcal{P}$.
            This specializes to classical prediction sets and tests, which are $\{0, 1/\alpha\}$-valued e-values.

            We use the equivalence to e-values to show that fuzzy prediction sets inherit their flexible merging properties: they may be averaged under arbitrary dependence, and multiplied under independence.
            Averaging non-fuzzy prediction sets generally produces fuzzy prediction sets.
            Multiplying prediction sets yields their union, and the corresponding confidence level is the product of the individual levels.

            \begin{figure}[htb!]
                \centering
                \includegraphics[width=6.5cm]{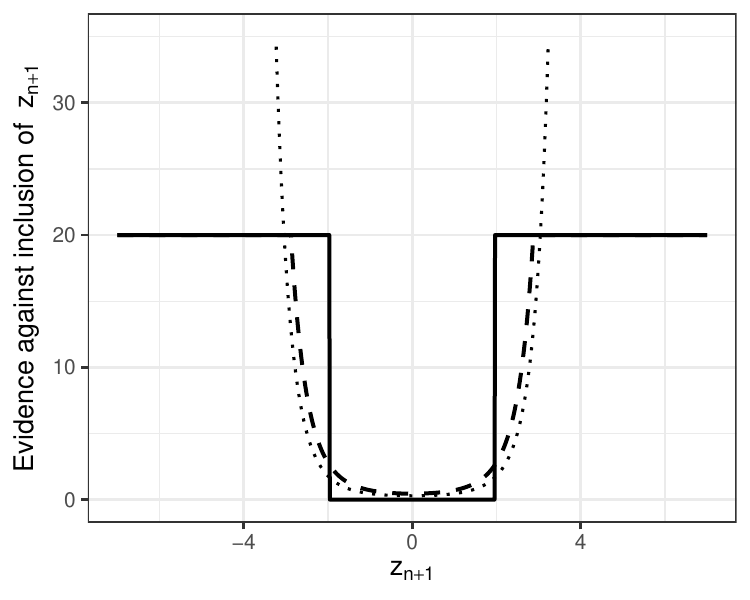}
                \caption{One traditional and two fuzzy prediction sets on the evidence scale. The solid line represents a traditional level $\alpha = 0.05$ prediction set, the dashed line represents a fuzzy $[0, 1/\alpha]$-valued prediction set, and the dotted line a fuzzy $[0, \infty]$-valued prediction set. An evidence value of $\mathcal{E}^{Z^n}(z) = e$ corresponds to an exclusion at the data-driven confidence level $1/e$.}
                \label{fig:fuzzy_confidence_set_rescaled}
            \end{figure}

        \subsection{Optimal (fuzzy) prediction sets}
            The equivalence to an e-value allows us to connect optimal prediction sets to the rich statistical theory on optimal hypothesis testing.
            In particular, we show that a prediction set is of minimal expected size under some measure $\mu$,
            \begin{align*}
                \argmin_{C_\alpha} \E^{Z^n}[\mu(C_\alpha^{Z^n})],
            \end{align*}
            if and only if the corresponding test is most powerful against an alternative that involves $\mu$.
            This most powerful test may be viewed as an e-value that optimizes the expected `Neyman--Pearson utility function' $U^{\textnormal{NP}} : x \mapsto x \wedge 1/\alpha$ \citep{koning2025continuoustestingunifyingtests}. 
            Other utility functions may be used to express different preferences regarding the value of different amounts of evidence.

            We specialize this to conformal prediction in Section \ref{sec:exchangeable_conformal}, leveraging recent results on optimal e-values for exchangeability \citep{koning2023post}.
            This also reveals in what sense classical conformal prediction is optimal, given a choice of conformity score.
            
        \subsection{Decision making, application and covariates}
            To motivate fuzzy prediction sets, we show that they offer richer loss bounds to decision-makers than classical prediction sets.
            For this purpose, we generalize the framework of \citet{kiyani2025decision} for traditional conformal prediction sets.
            One of our approaches connects to \citet{grunwald2023posterior}, and relies on the surprising (to us) observation that a fuzzy confidence set is equivalent to (the reciprocal of) an E-posterior. 
            This shows that the E-posterior is deeply connected to classical hypothesis testing. 
            Furthermore, we unify the related notions of P-certification and E-certification that were recently developed by \citet{andrews2025certified}.
            
            We apply our methodology to character-recognition in Section \ref{sec:application}.
            Here, we showcase the richness of classical prediction sets, and study how different utility functions shape the fuzzy prediction sets.
            In Appendix \ref{sec:covariates}, we extend to covariates.
            
        \subsection{Related literature}
            To the best of our knowledge, fuzzy \emph{confidence} sets were first proposed by \citet{geyer2005fuzzy} as a solution to under-coverage of classical Neyman--Pearson optimal confidence sets for discrete data in small samples.
            We instead advocate for a much wider use, by using a generalization of the Neyman--Pearson framework through e-values.
            A key innovation compared to \citet{geyer2005fuzzy} is that a fuzzy `degree of exclusion at level $\alpha$' is equivalent to an exclusion at a data-driven significance level, which we believe makes fuzzy prediction/confidence sets much more palatable.

            As the literature on conformal prediction and e-values remains fairly small, we attempt to give a complete overview.
            According to \citet{vovk2024conformal}, precursors of conformal prediction \citep{Gammerman1998,Vovk1999} actually relied on e-values (under a different name).
            He attributes their demise in conformal prediction to the fact that prediction sets were more naturally obtained by using p-values.
            This is in line with our finding that e-values are equivalent to \emph{fuzzy} prediction sets, which implies that e-values play no fundamental role in classical (non-fuzzy) prediction sets.
            
            The first explicit connection between modern conformal prediction and e-values appears in the 2020 arXiv version of \citet{vovk2024conformal}, where he exploits the merging properties of e-values to enable cross-conformal prediction by averaging over data splits. 
            Since then, several other works have used e-values as a \emph{pragmatic tool} to construct conformal prediction sets.
            For example, in the context of conformal selection, \cite{lee2024boosting}, \citet{lee2025full}, \citet{lee2025selection} and \citet{nair2025diversifying} compute e-values for (non-)selection events and plug these into the e-BH procedure to control the false discovery rate.
            \citet{gauthier2025values} average conformal e-values over ambiguous ground truths, where observations may carry multiple valid labels simultaneously.
            Moreover, \citet{gauthier2025values} also consider e-values to select a prediction set with a fixed number of elements.
            Our work differs from this line of work, as our intention is to \emph{directly report the e-values as output} in the form of a fuzzy prediction set, rather than using them as an intermediate step to obtain classical prediction sets.
 
            In the early version, \cite{vovk2024conformal} was also first to propose conformal e-values of the form \vspace{-1mm}
            \begin{align}\label{eq:T-based_e-value}
                \frac{T(Z_{n+1})}{\frac{1}{n+1} \sum_{i=1}^{n+1} T(Z_i)},
            \end{align}
            where $T$ is a non-negative statistic.
            This form was later independently rediscovered in the first arXiv version of \citet{wang2022false}, \citet{koning2023post} and \citet{balinsky2024enhancing}, and proven to be the only admissible types of e-values for conformal prediction by \citet{koning2023post}. 
            
            A key issue of the form \eqref{eq:T-based_e-value} is that there is little guidance on how to appropriately select the statistic $T$ and \citet{gauthier2025values} find the outcomes to be highly sensitive to this choice.           
            We turn away from \eqref{eq:T-based_e-value}, and instead introduce a general expected-utility optimality framework to conformal prediction, which enables users to quantify their preference for different degrees of evidence, and allows them to specify a measure $\mu$ that determines the most important regions of the outcome space.
            This can be interpreted as a way to optimally select $T$ in \eqref{eq:T-based_e-value}.
            Our framework nests classical conformal prediction as a special case for a particular choice of the utility function, as well as the setting studied by \citet{vovk2024conformal} for a logarithmic choice of utility function and counting measure $\mu$ in classification problems.
            In our application, we illustrate that log-utility is not always appropriate in the conformal prediction context.
            This is not surprising, as log-utility is generally considered in long-run sequential data settings instead of the single-batch setting common in conformal prediction.

        \section{Relationship between prediction sets and tests}\label{sec:prediction_sets}

            \subsection{Prediction sets}
        
                Let $\mathcal{Z}$ denote the sample space of a single observation, and $\mathcal{Z}^{n+1}$ the joint sample space of $(n+1)$ observations.\footnote{This is easily generalized to more general joint sample spaces, but we stick to copies of the same sample space for the sake of exposition.}
                A prediction set is a set $C_\alpha^{Z^{n}}$ with the level $\alpha > 0$ coverage guarantee
                \begin{align}\label{ineq:cover}
                    P(Z_{n+1} \in C_\alpha^{Z^{n}}) \geq 1 - \alpha, \textnormal{ for every } P \in \mathcal{P},
                \end{align}
                where $\mathcal{P}$ is some \emph{model} (collection of distributions) on $\mathcal{Z}^{n+1}$.
                This means the prediction set $C_\alpha^{Z^{n}}$ will cover $Z_{n+1}$ with at least $1 - \alpha$ probability.
                We say that such a prediction set is \emph{valid} for the model $\mathcal{P}$.
                Notice that the probability in \eqref{ineq:cover} is over both $Z^n$ and $Z_{n+1}$, so that the coverage guarantee is marginal over the entire tuple $Z^{n+1}$.
                
                Conformal prediction is the special case where the model $\mathcal{P}$ is the collection $\mathcal{P}^{\textnormal{exch.}}$ of exchangeable distributions on $\mathcal{Z}^{n+1}$, but we delay specializing to exchangeability to Section \ref{sec:exchangeable_conformal}.
                Moreover, we postpone the inclusion of covariates to Appendix \ref{sec:covariates} as their inclusion considerably clutters the notation and is not relevant for the discussion here.

            \subsection{Slicing a prediction set for $Z^{n+1}$}\label{sec:slicing}
                \begin{figure}
                    \centering
                    \includegraphics[width=0.6\linewidth]{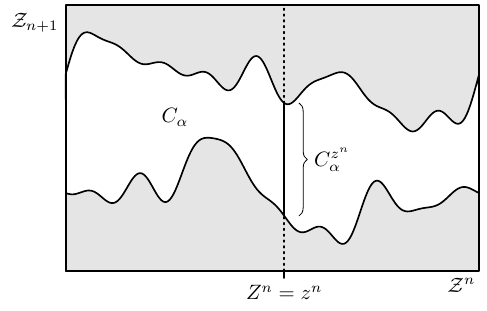}
                    \caption{Abstract illustration of the sample space $\mathcal{Z}^{n+1}$, where the horizontal dimension represents $\mathcal{Z}^n$ and the vertical dimension $\mathcal{Z}_{n+1}$.
                    The prediction set $C_\alpha$ for $Z^{n+1}$ is shaded white, and its complement $\mathcal{Z}^{n+1} \setminus C_\alpha$ grey.
                    The prediction set $C_\alpha^{Z^n}$ for $Z_{n+1}$ appears as a $Z^n$-dependent random slice of $C_\alpha$.}
                    \label{fig:rejection_region}
                \end{figure}

                The first key insight is that the construction of a prediction set $C_\alpha^{Z^n}$ for $Z_{n+1}$ is equivalent to constructing a prediction set $C_\alpha$ for $Z^{n+1}$.
                Indeed, the prediction set $C_\alpha^{z^n}$ can be interpreted as a `slice' of a prediction set $C_\alpha$:
                \begin{align}\label{eq:slicing}
                    C_\alpha^{z^n}
                        = \{z_{n+1} \in \mathcal{Z} : (z^n, z_{n+1}) \in C_\alpha\}.
                \end{align}                
                
                We illustrate this in Figure \ref{fig:rejection_region}, where the union of the slices $\{C_\alpha^{z^n}\}_{z^n \in \mathcal{Z}^n}$ forms the prediction set $C_\alpha$.
                Here, we stress that the prediction set $C_\alpha$ is deterministic: the randomness in $C_\alpha^{Z^n}$ is purely due to the random realization of the point $Z^n$ at which it is `sliced'.

                Validity of $C_\alpha$ as a prediction set for $Z^{n+1}$ under the model $\mathcal{P}$ is equivalent to the validity of $C_\alpha^{Z^n}$ in the sense of \eqref{ineq:cover}.
                Indeed, by \eqref{eq:slicing} we have $\mathbb{I}\{Z^{n+1} \in C_\alpha\} = \mathbb{I}\{Z_{n+1} \in C_\alpha^{Z^{n}}\}$, and so
                \begin{align*}
                    P(Z^{n+1} \in C_\alpha)
                        = P(Z_{n+1} \in C_\alpha^{Z^{n}}),
                \end{align*}
                for any $P$.
                As a consequence, we may focus on constructing prediction sets $C_\alpha$ for $Z^{n+1}$.

                \begin{rmk}[Slicing on observables]
                    These arguments may be extended to slicing based on any part or statistic of $Z^{n+1}$ that is observed.
                    We illustrate this in the covariates setting described in Appendix \ref{sec:covariates} where $Z_{n+1} = (X_{n+1}, Y_{n+1})$ and we observe $X_{n+1}$ alongside $Z^n$, to construct a prediction set for $Y_{n+1}$.
                    There, we still construct a prediction set $C_\alpha$ for $Z^{n+1}$ but then slice it at $(Z^n, X_{n+1})$.
                \end{rmk}

            \subsection{Prediction as a testing problem}
                A second insight is that a prediction set $C_\alpha$ is equivalent to a single hypothesis test.
                This is a powerful observation, because the construction of hypothesis tests is one of the most deeply studied topics in statistics.
                
                Recall from the introduction that we may view a prediction set $C_\alpha$ for $\mathcal{P}$ as a map $C_\alpha : \mathcal{Z}^{n+1} \to \{0, 1\}$, which returns an inclusion (0) or exclusion (1) for each point $z^{n+1} \in \mathcal{Z}^{n+1}$.
                We choose to interpret this map $C_\alpha$ as a test, where 0 represents a non-rejection and 1 a rejection.
                Coupling this to Figure \ref{fig:rejection_region}, the white region corresponds to the non-rejection region of this test and the grey region to its rejection region.
                To avoid using two different names for the same object, we use the notation $C_\alpha$ both for the set and test representation.
                
                The prediction set $C_\alpha$ for $Z^{n+1}$ (and by extension the prediction set $C_\alpha^{Z^n}$ for $Z_{n+1}$) is valid for the model $\mathcal{P}$ if and only if it is a valid test for the `hypothesis' $\mathcal{P}$, since
                \begin{align*}
                    P(Z_{n+1} \not\in C_\alpha^{Z^n})
                        = P(Z^{n+1} \not\in C_\alpha)
                        = \E^P[C_\alpha],
                \end{align*}
                for every $P \in \mathcal{P}$.
                A consequence is that \emph{any} test that is valid for $\mathcal{P}$ automatically produces a valid prediction set for the model $\mathcal{P}$, and vice-versa.
                
                Given the importance of this observation, we capture it in Theorem \ref{thm:confidence_set_test_equivalence} for future reference.
                
                \begin{thm}\label{thm:confidence_set_test_equivalence}
                    A prediction set $C_\alpha^{Z^n}$ is equivalent to the test $C_\alpha(z^{n+1})
                            := \mathbb{I}\{z^{n+1} \not\in C_\alpha^{z^n}\}$.
                    The prediction set $C_\alpha^{Z^n}$ is valid at level $\alpha$ for the model $\mathcal{P}$ if and only if this test $C_\alpha$ is valid at level $\alpha$ for the hypothesis $\mathcal{P}$.
                \end{thm}

                \begin{rmk}[Connection to classical testing]
                    At first glance the connection to testing may seem superficial, but we argue it is not.
                    Recall that in classical hypothesis testing we hypothesize a model $\mathcal{P}$ (often called ``the hypothesis'') and subsequently observe data.
                    As we observe the data, we must reject the hypothesized model if the data and model are incompatible.
                    In the construction of a prediction sets the roles are reversed: we fix a model $\mathcal{P}$ as the ground truth and hypothesize data $Z^{n+1} = z^{n+1}$.
                    If the two are incompatible, we are forced to reject the hypothesized data.
                    Extending this argument: if a part $Z^n = z^n$ of $Z^{n+1}$ is also observed alongside the model $\mathcal{P}$, we must reject the hypothesized part: $Z_{n+1} = z_{n+1}$.
                \end{rmk}
                
            \subsection{Optimal prediction sets and power}\label{sec:confidence_width_power}
                By Theorem \ref{thm:confidence_set_test_equivalence}, constructing a prediction set is equivalent to constructing a test.
                It remains to select a test that yields a `good' or even optimal prediction set in some appropriate sense.
                One may hope that an optimal prediction set corresponds to a test that is most powerful against some alternative.
                We show this is the case, by deriving the alternative hypothesis under which the test should be most powerful for a prediction set to be `as small as possible'.

                To formalize what we mean with `as small as possible', let $\mu$ denote some unsigned measure on $\mathcal{Z}$, selected to measure the size of $C_\alpha^{Z^n}$.
                For the sake of exposition, let us \emph{temporarily} assume we have access to the true distribution $P_*^{Z^n}$ of $Z^n$.
                It then seems natural to formalize our problem as minimizing the expected $\mu$-size of $C_\alpha^{Z^n}$ under $P_*^{Z^n}$:
                \begin{align*}
                    \argmin_{C_\alpha}\E^{P_*^{Z^n}}[\mu(C_\alpha^{Z^{n}})],
                \end{align*}
                over valid prediction sets $C_\alpha$.
                For example, if $\mathcal{Z}$ is finite then a natural choice for $\mu$ may be the counting measure, which leads to the minimization of the number of elements in our prediction set: \vspace{-6mm}
                \begin{align*}
                \argmin_{C_\alpha}\E^{P_*^{Z^n}}[|C_\alpha^{Z^{n}}|].
                \end{align*}

                In Theorem \ref{thm:most_powerful}, we show that $C_\alpha$  minimizes the expected size of $C_\alpha^{Z^n}$ if and only if its test-representation is most powerful against the alternative $Q = P_*^{Z^n} \otimes \mu$.
                In the result, we even show that the choice of $\mu$ may depend on $Z^n$. The proof of this result and all other omitted proofs can be found in Appendix \ref{sec:proofs}.
                In Section~\ref{sec:examples}, we include four examples where we apply this result in a Gaussian location setting.
                
                \begin{thm}\label{thm:most_powerful}
                    Let $\mu^{|Z^n}$ be a probability kernel dependent on $Z^n$.
                    Then $C_\alpha$ minimizes
                    \begin{align*}
                        \E^{P_*^{Z^n}}[\mu^{|Z^n}(C_\alpha^{Z^n})],
                    \end{align*}
                    if and only if the test $C_\alpha$ is most powerful against the alternative $Q = P_*^{Z^n} \otimes \mu^{|Z^n}$.
                \end{thm}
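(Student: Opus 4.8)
The plan is to reduce the claim to a one-line affine identity between the two objectives, using the translation of Theorem~\ref{thm:confidence_set_test_equivalence}. First I would replace the confidence set $C_\alpha$ by its associated test $\phi(z^{n+1}) = \mathbb{I}\{z_{n+1} \notin C_\alpha^{z^n}\}$. By Theorem~\ref{thm:confidence_set_test_equivalence} this correspondence is a bijection that carries the level-$\alpha$ valid confidence sets for $\mathcal{P}$ onto the level-$\alpha$ valid tests for $\mathcal{P}$, so the feasible set of the $\argmin$ on the left matches, under this bijection, the feasible set over which ``most powerful'' is defined on the right. It therefore suffices to show that the map $\phi \mapsto \E^Q[\phi]$ and the map $C_\alpha \mapsto \E^{P_*^{Z^n}}[\mu^{|Z^n}(C_\alpha^{Z^n})]$ differ by an order-reversing affine transformation that does not depend on the test.

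To obtain that identity, I would unfold the power against $Q$. The alternative $Q = P_*^{Z^n} \otimes \mu^{|Z^n}$ is the law on $\mathcal{Z}^{n+1}$ that draws $Z^n \sim P_*^{Z^n}$ and then $Z_{n+1}\mid Z^n \sim \mu^{|Z^n}$, which is well defined precisely because $\mu^{|Z^n}$ is a measurable probability kernel. Disintegrating along this semidirect product gives
\[
    \E^Q[\phi] = \E^{P_*^{Z^n}}\!\big[\mu^{|Z^n}(\{z\in\mathcal{Z} : z\notin C_\alpha^{Z^n}\})\big].
\]
Because $\mu^{|Z^n}$ is a probability measure on $\mathcal{Z}$ for each realization of $Z^n$, the inner integral equals $1 - \mu^{|Z^n}(C_\alpha^{Z^n})$, so
\[
    \E^Q[\phi] = 1 - \E^{P_*^{Z^n}}[\mu^{|Z^n}(C_\alpha^{Z^n})].
\]
Since the additive constant $1$ is the same for every test, maximizing the power is equivalent to minimizing the expected $\mu^{|Z^n}$-size, and both directions of the stated equivalence follow from this single line: $C_\alpha$ attains the $\argmin$ among valid confidence sets iff $\phi$ attains the maximal power among valid tests for $\mathcal{P}$, i.e. iff $\phi$ is most powerful against $Q$.

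The step I expect to demand the most care is not analytically deep but bookkeeping: pinning down that $z^n \mapsto \mu^{|z^n}$ is a genuine probability kernel, so that $P_*^{Z^n}\otimes\mu^{|Z^n}$ exists as a distribution on $\mathcal{Z}^{n+1}$ and the disintegration above is legitimate (an application of Fubini / Ionescu--Tulcea), together with stating ``most powerful'' explicitly relative to the class of level-$\alpha$ valid tests for $\mathcal{P}$---the class that Theorem~\ref{thm:confidence_set_test_equivalence} puts in bijection with the valid confidence sets over which the $\argmin$ ranges. It is also worth remarking that the hypothesis that $\mu^{|Z^n}$ is a \emph{probability} kernel, rather than an arbitrary unsigned kernel, is exactly what makes the affine relationship order-reversing with a test-independent constant; for a general kernel one would additionally need $\mu^{|Z^n}(\mathcal{Z})$ to be constant in $Z^n$.
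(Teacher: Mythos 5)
Your proof is correct and follows essentially the same route as the paper: both arguments reduce to the affine identity $\E^Q[\phi] = 1 - \E^{P_*^{Z^n}}[\mu^{|Z^n}(C_\alpha^{Z^n})]$, obtained by disintegrating $Q = P_*^{Z^n}\otimes\mu^{|Z^n}$ and using that the kernel has total mass one, after which minimizing expected size and maximizing power coincide. Your added remarks on the bijection of feasible sets and on why the \emph{probability}-kernel hypothesis matters are sensible refinements of the same argument, not a different proof.
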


            \subsection{Handling unknown $P_*^{Z^n}$}
                In practice, the true distribution $P_*^{Z^n}$ is generally unknown.
                Fortunately, we can just plug-in an educated guess or estimator $\widehat{P}^{Z^n}$ for $P_*^{Z^n}$, based on a separate dataset.
                The resulting prediction set will still be valid for $\mathcal{P}$.
                The only consequence is that it is optimal for the estimator $\widehat{P}^{Z^n}$, instead of for the true distribution $P_*^{Z^n}$.                
                This mirrors the classical problem in hypothesis testing that if we do not `know' the true alternative distribution in case of a composite alternative, then we can rarely hope to construct a test that is most powerful against it, except for extremely rare settings where there exist uniformly most powerful tests.
                
                In case there is no structure available, a general-purpose tool is to construct a valid test against several (or all) possible guesses of $P_*^{Z^n}$, and choose our test as some (probability)-weighted average over these tests.
                However, such an average would usually take value in $[0, 1]$, so that this is an example of a fuzzy test.
                Another strategy is to construct an optimal test against such a weighted average over different resulting choices of $Q$.


            \subsection{Examples: Gaussian location}\label{sec:examples}
                In this section, we illustrate our framework in Gaussian location models.
                For natural choices of the kernel $\mu^{|Z^n}$ we recover familiar or expected prediction sets.
                At the same time, we find that the classical two-sided $z$-test produces a strange prediction set, matching a kernel $\mu^{|Z^n}$ that emphasizes a peculiar part of the sample space.
                                
                \begin{exm}[i.i.d. simple Gaussian]\label{exm:simple_gaussian_NP}
                    When $\mathcal{P}$ is a singleton, and both $Z_{n+1}$ and $\mu$ do not depend on $Z^n$,  there is nothing to learn from $Z_n$, so the optimal prediction set is deterministic.

                    To illustrate this, let $\mathcal{Z} = \mathbb{R}$, and $\mathcal{P} = \{\mathcal{N}(\delta 1_{n+1}, \sigma^2 I_{n+1})\}$, with known $\delta \in \mathbb{R}$ and $\sigma > 0$.
                    Following Theorem \ref{thm:most_powerful}, we choose a measure $\mu^{|Z^n}$ that expresses what we mean by a small prediction set.
                    A natural choice is the Lebesgue measure $\mu^{|Z^n} = \lambda$.
                    However, as $\lambda$ is not finite, we replace it by $\mathcal{N}(\delta,\tau^2)$ for $\tau>\sigma$.
                    This is of no consequence here, because the likelihood ratio $\frac{d\mathcal{N}(\delta, \tau^2)}{d\mathcal{N}(\delta, \sigma^2)}$ is a monotone transformation of $\frac{d\lambda}{d\mathcal{N}(\delta, \sigma^2)}$, so that both yield the same optimal test by invariance of the Neyman-Pearson test to monotone transformations.

                    For this choice of $\mu^{|Z^n}$, we obtain the alternative $Q = \mathcal{N}(\delta 1_{n}, \sigma^2 I_{n})\times \mathcal{N}(\delta,\tau^2)$.
                    Since the first $n$ marginals of $Q$ equal those under $P\in \mathcal{P}$, the likelihood ratio reduces to $\frac{d\mathcal{N}(\delta, \tau^2)}{d\mathcal{N}(\delta, \sigma^2)}(z_{n+1})$, which is increasing in $|z_{n+1}-\delta|$.
                    The most powerful test rejects when $|z_{n+1}-\delta|>\sigma c_{1-\alpha/2}$, where $ c_{1-\alpha/2}$ denotes the standard Gaussian's $(1-\alpha/2)$-quantile.
                    Inverting this test gives the prediction set $[\delta-\sigma c_{1-\alpha/2},\delta+\sigma c_{1-\alpha/2}]$: the standard Gaussian prediction interval.
                    
                \end{exm}
                \begin{exm}[i.i.d. Composite Gaussian]\label{exm:composite_gaussian_NP}
                    If $\mathcal{P}$ is composite, the first $n$ observations help to narrow down the true measure $P_*^{Z^{n+1}} \in \mathcal{P}$, so the optimal prediction set $C_\alpha^{Z^n}$ depends on $Z^n$.
                    
                    To illustrate this, let $\mathcal{Z} = \mathbb{R}$ and $\mathcal{P} = \{\mathcal{N}(\delta 1_{n+1}, \sigma^2 I_{n+1}) : \delta \in \mathbb{R}\}$ with known $\sigma > 0$.
                    We again minimize the expected Lebesgue measure and, as in Example~\ref{exm:simple_gaussian_NP}, replace $\lambda$ by a Gaussian with larger variance.
                    Since $P_*^{Z^n}$ is now unknown, the alternative becomes composite: $\mathcal{Q} = \{\mathcal{N}(\delta 1_{n}, \sigma^2 I_{n}) \times \mathcal{N}(\delta, \tau^2) : \delta \in \mathbb{R}\}$, $\tau > \sigma$.
                    Both $\mathcal{P}$ and $\mathcal{Q}$ are invariant under $x \mapsto x + c1_{n+1}$, so by the Hunt--Stein theorem we may restrict to tests of the maximal invariant $(Z^n - \bar{Z}_n, Z_{n+1} - \bar{Z}_n)$.
                    The first component has the same distribution under $\mathcal{P}$ and $\mathcal{Q}$, so the likelihood ratio depends only on $B = Z_{n+1} - \bar{Z}_n$.
   
                    Under $\mathcal{P}$, $B \sim \mathcal{N}(0, \sigma^2(1 + 1/n))$, and under $\mathcal{Q}$, $B \sim \mathcal{N}(0, \tau^2(1 + 1/n))$, so the likelihood ratio is increasing in $|B|$ and the most powerful test rejects if $|B| > c_{1 - \alpha/2}\,\sigma\sqrt{1 + 1/n}$, so
                    \begin{align*}
                       C_\alpha^{Z^n} = [\bar{Z}_{n} - c_{1 - \alpha/2} \sigma \sqrt{1 + 1/n},\; \bar{Z}_{n} + c_{1 - \alpha/2} \sigma \sqrt{1 + 1/n}].
                    \end{align*}
                    Compared to Example~\ref{exm:simple_gaussian_NP}, the center $\delta$ is replaced by the estimator $\bar{Z}_n$ and the resulting estimation uncertainty inflates the width by a factor $\sqrt{1 + 1 / n}$.
                \end{exm}

                \begin{exm}[Autoregressive Gaussian]\label{exm:simple_gaussian_AR_NP}
                    In the previous examples, the kernel $\mu^{|Z^n}$ was independent of $Z^n$.
                    This is usually not the case if $Z_{n+1}$ and $Z^n$ are dependent.
                    
                    Suppose $\mathcal{P} = \{\mathcal{N}(\delta 1_{n+1}, \Sigma)\}$, where $\Sigma_{i,j} = \sigma^2\rho^{|i-j|}$ corresponds to an AR(1) model with $\rho \in (-1,1)$ and $\sigma > 0$.
                    Here, it is natural to choose the kernel $\mu^{|Z^n} = \mathcal{N}(\delta_n, \tau^2)$ centered at the conditional mean $\delta_n = \delta + \rho(z_n - \delta)$ of $Z_{n+1}$ given $Z^n$, with $\tau > \sigma\sqrt{1 - \rho^2}$.
                    As in Example~\ref{exm:simple_gaussian_NP}, the likelihood ratio depends only on $z_{n+1}$ and is increasing in $|z_{n+1} - \delta_n|$, so the most powerful test rejects when $|z_{n+1} - \delta_n| > \sigma\sqrt{1 - \rho^2}\, c_{1 - \alpha/2}$, yielding the prediction set $[\delta_n - \sigma\sqrt{1 - \rho^2}\, c_{1 - \alpha/2},\; \delta_n + \sigma\sqrt{1 - \rho^2}\, c_{1 - \alpha/2}]$: the standard AR(1) prediction interval.
                    This illustrates the role of the kernel $\mu^{|Z^n}$: by centering it at the conditional mean, the prediction set adapts to the dependence structure.
            \end{exm}
            
            \begin{exm}[Prediction set based on two-sided $z$-test]\label{exm:2-sided_z-test}
                While every test corresponds to a prediction set, classical tests may not produce desirable prediction sets.

                To illustrate this, we consider the prediction set that corresponds to the classical two-sided $z$-test for $\mathcal{P} = \{\mathcal{N}(\delta 1_{n+1}, \sigma^2 I_{n+1})\}$, which rejects if $|\bar{Z}_{n+1} - \delta| > \sigma c_{1-\alpha/2} / \sqrt{n+1}$.
                Inverting in $z_{n+1}$ produces the prediction set
                \begin{align*}
                        [(n+1)\delta - n\bar{Z}_n - \sqrt{n+1}\, \sigma c_{1-\alpha/2},(n+1)\delta - n\bar{Z}_n + \sqrt{n+1}\, \sigma c_{1-\alpha/2} ].
                \end{align*}
                Compared to Example~\ref{exm:simple_gaussian_NP}, this set is centered at $(n+1)\delta - n\bar{Z}_n$ instead of $\delta$ and it is wider by a factor $\sqrt{n + 1}$.
                Inverting Theorem~\ref{thm:most_powerful}, we can deduce that it is optimal for the kernel $\mu^{|Z^n} = \mathcal{N}(\delta + n(\bar{Z}_n - \delta),\, 2\sigma^2)$, which emphasizes a region far from $\delta$.
            \end{exm}
             
        \section{Fuzzy prediction sets and e-values}\label{sec:fuzzy_conformal}
            Fuzzy prediction sets move beyond the binary exclusion $Z_{n+1} \not\in C_\alpha^{Z^n}$ at a prespecified level $\alpha$ offered by a classical prediction set $C_\alpha^{Z^n}$.
            The underlying idea is to generalize from tests to e-values, where we follow the perspective presented in \citet{koning2025continuoustestingunifyingtests}.

            \subsection{From tests to e-values}\label{sec:fuzzy_is_different_alpha}
                Without loss of generality, the first step is to incorporate the level $\alpha$ into the decision space: we redefine a level $\alpha$ prediction set (test) to be $\{0, 1/\alpha\}$-valued instead of $\{0, 1\}$-valued
                \begin{align*}
                    \mathcal{E}_\alpha^{Z^n} : \mathcal{Z} \to \{0, 1/\alpha\}.
                \end{align*}
                Here, we switch the notation from $C$ to $\mathcal{E}$ to emphasize the different scale.
                Moreover, due to the rescaling, the prediction set is now valid if $\E^P[\mathcal{E}_\alpha^{Z^n}(Z_{n+1})] \leq 1$.

                The key insight is that incorporating the level into the decision space enables us to go beyond the binary exclusion by incorporating more options into the decision space: 
                \begin{align*}
                    \mathcal{E}^{z^n} : \mathcal{Z} \to \{0, 1/\alpha_1, 1/\alpha_2, \dots\},
                \end{align*} 
                $\alpha_1, \alpha_2 > 0$.
                Allowing every value of $\alpha > 0$ yields the most general form $\mathcal{E}^{z^n} : \mathcal{Z} \to [0, \infty]$.
                The interpretation is that each point $z^{n+1}$ is excluded at its own data-driven significance level $\widetilde{\alpha}(z^{n+1}) = 1 / \mathcal{E}^{z^n}(z_{n+1})$.

                \begin{dfn}[Fuzzy prediction set]
                    A fuzzy prediction set for $Z_{n+1}$ is a measurable map $\mathcal{E} : \mathcal{Z}^{n+1} \to [0, \infty]$.
                    It is valid for $\mathcal{P}$ if $\E^P[\mathcal{E}^{Z^n}(Z_{n+1})] \leq 1$, for every $P \in \mathcal{P}$.
                \end{dfn}

                \begin{rmk}[Randomization]\label{rmk:randomized}
                    If desired, a binary prediction set at a prespecified level $\alpha$ may be retrieved from a fuzzy prediction set through randomization.
                    In particular, we may independently draw $U \sim \textnormal{Unif}[0, 1]$, and construct the (randomized) prediction set $\mathcal{E}_\alpha^{Z^n}
                            = \frac{1}{\alpha} \mathbb{I}\left\{\mathcal{E}^{Z^n} \geq U / \alpha\right\}$.
                    This randomized prediction set then still satisfies the classical level $\alpha$ coverage guarantee, where the coverage probability includes the randomization.
                    We do not recommend such a randomized procedure unless a binary prediction set is practically necessary.
                \end{rmk}

            \subsection{Data-dependent Type-I error}
                Since a fuzzy prediction set excludes points at a data-driven confidence level, we must be careful in their interpretation as the classical Type-I coverage guarantee only concerns data-independent levels.
                We must therefore extend the classical Type-I error coverage guarantee to data-dependent levels \citep{koning2024post, grunwald2022beyond}.

                To prepare for the interpretation, we collect the points $z \in \mathcal{Z}$ not rejected at level $\alpha$:
                \begin{align}\label{eq:sublevel_sets}
                    \overline{C}_\alpha^{z^n}
                        = \{z \in \mathcal{Z} : \mathcal{E}^{z^n}(z) < 1/\alpha\}.
                \end{align}
                Repeating this exercise for every $\alpha > 0$ yields a collection of prediction sets $(\overline{C}_\alpha^{z^n})_{\alpha > 0}$.
                This collection $(\overline{C}_\alpha^{z^n})_{\alpha > 0}$ is equivalent to $\mathcal{E}^{z^n}$, since $\mathcal{E}^{z^n}(z) = \sup\{1/\alpha :  z \not\in \overline{C}_\alpha^{z^n}\}$.

                Theorem \ref{thm:post-hoc} shows that this collection of prediction sets $( \overline{C}_\alpha)_{\alpha > 0}$ is \emph{post-hoc} valid; marginally over all data-dependent significance levels $\widetilde{\alpha}$.
                The proof follows from Theorem 2 in \citet{koning2024post}, applied to the test $\overline{C}_\alpha^{z^n} = \mathbb{I}\{z_{n+1} \not\in \overline{C}_\alpha^{z^n}\}$.
                The subsequent Lemma \ref{lem:smallest-level} follows immediately from \eqref{eq:sublevel_sets}.
    
                \begin{thm}\label{thm:post-hoc}
                    The collection of sublevel sets $(\overline{C}_\alpha^{Z^n})_{\alpha > 0}$ of a fuzzy prediction set $\mathcal{E}^{Z^n}$ satisfies
                    \begin{align}\label{dfn:post-hoc_valid}
                        \E_{\widetilde{\alpha}}^P\left[\frac{P(Z_{n+1} \not\in \overline{C}_{\widetilde{\alpha}}^{Z^n} \mid \widetilde{\alpha})}{\widetilde{\alpha}}\right] \leq 1, \textnormal{ for every } P \in \mathcal{P},
                    \end{align}
                    for every data-dependent significance level $\widetilde{\alpha}$ if and only if $\mathcal{E}^{Z^n}$ is valid.
                \end{thm}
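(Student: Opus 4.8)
The plan is to reduce the claim to a single pointwise inequality about the random evidence value $E := \mathcal{E}^{Z^n}(Z_{n+1})$ and then integrate. By the defining relation \eqref{eq:sublevel_sets}, for any level $a > 0$ we have $z \notin C_a^{Z^n}$ if and only if $\mathcal{E}^{Z^n}(z) \geq 1/a$; evaluating at $z = Z_{n+1}$ and at the data-dependent level $a = \widetilde{\alpha}$ yields the identity
\begin{align*}
    \mathbb{I}\{Z_{n+1} \notin C_{\widetilde{\alpha}}^{Z^n}\} = \mathbb{I}\{E \geq 1/\widetilde{\alpha}\}.
\end{align*}

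First I would establish the key pointwise bound: for every outcome and every $\widetilde{\alpha} > 0$,
\begin{align*}
    \frac{\mathbb{I}\{E \geq 1/\widetilde{\alpha}\}}{\widetilde{\alpha}} \leq E.
\end{align*}
On the event $\{E \geq 1/\widetilde{\alpha}\}$ the left-hand side equals $1/\widetilde{\alpha} \leq E$ (including the case $E = \infty$), and off that event it is $0 \leq E$; nonnegativity of $E$, which is $[0,\infty]$-valued, is all that is needed, and the bound is insensitive to how $\widetilde{\alpha}$ is chosen --- whether it depends on $Z^n$, on $Z_{n+1}$, or on external randomization.

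Next I would take the conditional expectation under $P$ given $\widetilde{\alpha}$. Since $\widetilde{\alpha}$ is measurable with respect to the conditioning $\sigma$-algebra, the factor $1/\widetilde{\alpha}$ pulls out, and monotonicity of conditional expectation turns the pointwise bound into
\begin{align*}
    \frac{P(Z_{n+1} \notin C_{\widetilde{\alpha}}^{Z^n} \mid \widetilde{\alpha})}{\widetilde{\alpha}}
        = \E^P\!\left[\frac{\mathbb{I}\{E \geq 1/\widetilde{\alpha}\}}{\widetilde{\alpha}} \,\middle|\, \widetilde{\alpha}\right]
        \leq \E^P[E \mid \widetilde{\alpha}].
\end{align*}
Taking the outer expectation over $\widetilde{\alpha}$ and applying the tower property then gives
\begin{align*}
    \E_{\widetilde{\alpha}}^P\!\left[\frac{P(Z_{n+1} \notin C_{\widetilde{\alpha}}^{Z^n} \mid \widetilde{\alpha})}{\widetilde{\alpha}}\right]
        \leq \E^P[E] = \E^P[\mathcal{E}^{Z^n}(Z_{n+1})] \leq 1,
\end{align*}
where the last inequality is exactly the validity \eqref{eq:valid_e_fuzzy} of the fuzzy confidence set, and the stated Lemma \ref{lem:smallest-level} then falls out of \eqref{eq:sublevel_sets} directly.

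I do not anticipate a genuine obstacle: the content is the one-line pointwise inequality, and the only points demanding care are bookkeeping ones --- phrasing that inequality so it covers $E = \infty$ and any boundary behaviour of $\widetilde{\alpha}$, and checking the measurability and tower-property step remains legitimate for whatever class of data-dependent or randomized levels $\widetilde{\alpha}$ is intended. This parallels the argument for Theorem~2 in \citet{koning2024post}, with the e-variable there replaced by $\mathcal{E}^{Z^n}(Z_{n+1})$ and the rejection event by the non-coverage event $\{Z_{n+1} \notin C_{\widetilde{\alpha}}^{Z^n}\}$.
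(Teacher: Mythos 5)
Your proposal is correct and follows essentially the same route as the paper's own proof: a pointwise bound $\mathbb{I}\{Z_{n+1}\not\in C_{\widetilde{\alpha}}^{Z^n}\}/\widetilde{\alpha}\le\mathcal{E}^{Z^n}(Z_{n+1})$ (which the paper phrases via the ``smallest excluding level'' $1/\mathcal{E}^{Z^n}(Z_{n+1})$ and you establish by direct case analysis on the event $\{E\ge 1/\widetilde{\alpha}\}$), followed by the tower property and the validity bound $\E^P[\mathcal{E}^{Z^n}(Z_{n+1})]\le 1$. Your explicit attention to the $E=\infty$ case and to measurability of $\widetilde{\alpha}$ is sound bookkeeping but does not change the argument.
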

    
                \begin{lem}\label{lem:smallest-level}
                    The smallest data-dependent level $\widetilde{\alpha}$ for which $z$ is excluded from $\overline{C}_{\widetilde{\alpha}}^{Z^n}$ equals $\widetilde{\alpha} = 1/\mathcal{E}^{Z^n}(z)$.
                \end{lem}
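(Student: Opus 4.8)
The plan is to prove the lemma by directly unwinding the definition \eqref{eq:sublevel_sets} of the sublevel sets, working at a fixed realization $Z^n = z^n$ and a fixed candidate point $z \in \mathcal{Z}$. First I would translate ``$z$ is excluded from $C_{\widetilde{\alpha}}$'' into a statement about $\mathcal{E}^{z^n}(z)$: by \eqref{eq:sublevel_sets}, membership $z \in C_{\widetilde{\alpha}}^{z^n}$ is exactly the strict inequality $\mathcal{E}^{z^n}(z) < 1/\widetilde{\alpha}$, so exclusion $z \notin C_{\widetilde{\alpha}}^{z^n}$ is its negation, $\mathcal{E}^{z^n}(z) \ge 1/\widetilde{\alpha}$.

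The second step is to solve this inequality for the level. Since $\widetilde{\alpha} > 0$ and $t \mapsto 1/t$ is strictly decreasing on $(0, \infty)$ (with the usual conventions $1/0 = \infty$ and $1/\infty = 0$), the condition $\mathcal{E}^{z^n}(z) \ge 1/\widetilde{\alpha}$ is equivalent to $\widetilde{\alpha} \ge 1/\mathcal{E}^{z^n}(z)$. Hence the set of levels at which $z$ is excluded is the ray $\{\widetilde{\alpha} > 0 : \widetilde{\alpha} \ge 1/\mathcal{E}^{z^n}(z)\}$. Its infimum is $1/\mathcal{E}^{z^n}(z)$, and I would check this infimum is attained: at $\widetilde{\alpha} = 1/\mathcal{E}^{z^n}(z)$ we have the equality $\mathcal{E}^{z^n}(z) = 1/\widetilde{\alpha}$, so the strict inequality defining membership in \eqref{eq:sublevel_sets} fails and $z$ is indeed excluded. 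Therefore the smallest level at which $z$ is excluded equals $1/\mathcal{E}^{z^n}(z)$, as claimed.

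Finally I would dispose of the boundary cases and the data-dependence. If $\mathcal{E}^{z^n}(z) = 0$ then $1/\mathcal{E}^{z^n}(z) = \infty$ and $z$ lies in $C_\alpha^{z^n}$ for every $\alpha > 0$, so it is never excluded at any finite level, consistent with the convention; if $\mathcal{E}^{z^n}(z) = \infty$ then $1/\mathcal{E}^{z^n}(z) = 0$ and $z \notin C_\alpha^{z^n}$ for all $\alpha > 0$. Since the argument is valid verbatim for every realization $z^n$, the map $z^n \mapsto 1/\mathcal{E}^{z^n}(z)$ is itself a legitimate data-dependent level, which is exactly the object the lemma identifies. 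The only point requiring any care --- and thus the ``main obstacle,'' such as it is --- is the inversion of the inequality together with the verification that the infimum is attained, owing to the strict inequality in the definition of the sublevel set; everything else is immediate from \eqref{eq:sublevel_sets}.
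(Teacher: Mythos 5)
Your proof is correct and follows exactly the route the paper intends: the paper offers no separate proof, stating only that the lemma ``follows immediately from \eqref{eq:sublevel_sets},'' and your argument is precisely that unwinding (exclusion $\Leftrightarrow \mathcal{E}^{z^n}(z)\ge 1/\widetilde{\alpha} \Leftrightarrow \widetilde{\alpha}\ge 1/\mathcal{E}^{z^n}(z)$, with attainment at equality because the sublevel set is defined by a strict inequality). The only difference is that you spell out the attainment check and the boundary conventions, which the paper leaves implicit.
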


                \begin{rmk}[Interpretation guarantee]
                    The guarantee \eqref{dfn:post-hoc_valid} should not be misinterpreted as the conditional guarantee $P(Z_{n+1} \not\in C_{\widetilde{\alpha}}^{Z^n} \mid \widetilde{\alpha}) / \widetilde{\alpha} \leq 1$, for every realization of $\widetilde{\alpha}$, and every $\widetilde{\alpha}$.
                    Indeed, \eqref{dfn:post-hoc_valid} only offers this in expectation over $\widetilde{\alpha}$.
                    At the same time, \eqref{dfn:post-hoc_valid} is stronger than $P(Z_{n+1} \not\in C_{\widetilde{\alpha}}^{Z^n})  \leq \E^P[\widetilde{\alpha}]$; see Example 8 in \citet{koning2024post}.
                \end{rmk}

                \begin{rmk}
                    As a traditional (non-fuzzy) level $\alpha$ prediction set may be seen as a special binary fuzzy prediction set, we may technically also use data-dependent levels with traditional prediction sets.
                    However, we then find that if we select $\widetilde{\alpha} > \alpha$ this yields the same prediction set as $\alpha$ but with a weaker guarantee $\widetilde{\alpha}$, and if $\widetilde{\alpha} < \alpha$ then the resulting prediction set is the entire sample space.
                    In this sense, a non-fuzzy level $\alpha$ prediction set effectively forces the choice $\widetilde{\alpha} = \alpha$ as the only sensible option.
                \end{rmk}

                \begin{rmk}[Relationship to predictive distributions]
                    In the conformal setting, nested collections of valid prediction sets are sometimes used to construct predictive distributions for $Z_{n+1}$ \citep{vovk2017nonparametric, gneiting2014probabilistic}.
                    Such a predictive distribution assigns a calibrated probability to every event, but this calibration breaks down for events chosen based on the realized predictive distribution.
                    A valid fuzzy prediction set does not yield a probability measure, but its sublevel sets form a nested collection of prediction sets that retain a validity guarantee when chosen post-hoc, based on the realized fuzzy prediction set.
                \end{rmk}

            \subsection{A fuzzy prediction set is an e-value}
                We now present Theorem \ref{thm:equivalence_fuzzy_confidence_e-value}, which shows that a fuzzy prediction set $\mathcal{E}^{Z^n}$ for $Z_{n+1}$ under model $\mathcal{P}$ is equivalent to an e-value $\mathcal{E}$ defined as $\mathcal{E}(z^{n+1}) = \mathcal{E}^{z^n}(z_{n+1})$ for the hypothesis $\mathcal{P}$.
                This means that to construct a fuzzy prediction set $\mathcal{E}^{Z^n}$, it suffices to construct an e-value $\mathcal{E}$ for $\mathcal{P}$.
                Theorem \ref{thm:equivalence_fuzzy_confidence_e-value} generalizes Theorem \ref{thm:confidence_set_test_equivalence} from non-fuzzy to fuzzy prediction sets.
                
                \begin{thm}\label{thm:equivalence_fuzzy_confidence_e-value}
                    A fuzzy prediction set $\mathcal{E}^{Z^n} : \mathcal{Z} \to [0, \infty]$ for $Z_{n+1}$ is equivalent to a fuzzy prediction set $\mathcal{E}$ for $Z^{n+1}$: $\mathcal{E}^{z^n}(z)
                            = \mathcal{E}(z^n, z)$.
                    Such a fuzzy prediction set $\mathcal{E}$ is an e-value, and it is valid for $\mathcal{P}$ if and only if it is a valid e-value for $\mathcal{P}$.
                \end{thm}
        
            \subsection{Utility-optimal fuzzy prediction sets}\label{sec:optimal_fuzzy}
                In this section, we study optimal fuzzy prediction sets.
                This generalizes the optimality theory developed in Section \ref{sec:confidence_width_power} from finding an optimal test to finding an optimal e-value.

                In the binary setting, which corresponds to a $\{0, 1/\alpha\}$-valued e-value, only a single objective seems to make sense: maximizing the frequency of hitting $1 / \alpha$.
                Things change when we move beyond the binary setting, since we must express our preferences over the various amounts of evidence that we may obtain.
                We capture this using a utility function $U : [0, \infty] \mapsto [-\infty, \infty]$, maximizing the expected utility of evidence $\E^Q[U(\mathcal{E})]$ under some alternative $Q$.
                Here, we assume the utility function is non-decreasing and concave, expressing that more evidence is better but decreasingly much so.

                We give examples of various utility functions.
                Practical modifications such as clipping or dampening of e-values are discussed in Appendix \ref{sec:clip}.

                \begin{exm}[Log-utility]
                    The e-value literature focuses almost exclusively on log-utility $U = \log$ \citep{grunwald2023safe, larsson2024numeraire, shafer2021testing}. 
                    The log-utility function may be interpreted as valuing evidence linearly in the order of magnitude: $\log(10)$ is half as valuable as $\log(100) = 2\log(10)$.
                    The main motivation for log-utility usually comes from i.i.d. sequential settings: log-utility optimality can be coupled to minimizing expected stopping times through Wald's Identity.
                    Moreover, log-utility is the only utility that is preserved under conditioning; see Proposition 6 in  \citep{koning2026anytime}.
                \end{exm}

                \begin{exm}[Neyman--Pearson-utility]
                    The classical Neyman--Pearson framework is recovered by the `Neyman--Pearson utility' $U_\alpha^{\textnormal{NP}}(x) = x \wedge 1 / \alpha$ \citep{koning2025continuoustestingunifyingtests}.
                    This leads to a linear valuation of evidence up to the threshold $1 / \alpha$, attaching no value to evidence beyond $1 / \alpha$.
                    Since evidence beyond $1 / \alpha$ has no value, this effectively restrains the e-value to $[0, 1/\alpha]$.
                    The familiar Neyman--Pearson randomized testing framework is recovered by rescaling to $[0, 1]$.
                \end{exm}

                \begin{exm}[Capped power-utility]
                    The log-utility and Neyman--Pearson-utility can be united through a two-parameter capped power utility $U_{\alpha, h}(x) = \{(x \wedge 1/\alpha)^{h} - 1\} / h$, $h \leq 1$, $h \neq 0$, and $U_{\alpha, 0}(x) = \log(x \wedge 1/\alpha)$, which appears as the $h \to 0$ limit \citep{koning2025continuoustestingunifyingtests}.
                    Here, the configuration $\alpha = 0$, $h = 0$ recovers $U = \log$ and $\alpha > 0$, $h = 1$ recovers $U^{\textnormal{NP}}$.
                \end{exm}

                While the Neyman--Pearson utility function is (implicitly) the universal standard in classical statistics, we do not believe this truly expresses the valuation of evidence of analysts.
                Moreover, this Neyman--Pearson utility results in prediction sets that are near-binary, with nearly all points assigned either $0$ or $1/\alpha$, as evidenced by the classical likelihood ratio test which emerges from the Neyman--Pearson lemma.
                We believe this is one reason that \citet{geyer2005fuzzy} did not appreciate the potential of fuzzy prediction sets outside of discrete data, as they (implicitly) stuck to such a Neyman--Pearson-style utility function.
                Therefore, we \emph{must} move away from Neyman--Pearson utility to obtain properly fuzzy prediction sets.

                We can translate the maximization of the expected utility of the e-value $\mathcal{E}$ to a statement about a fuzzy prediction set for $Z_{n+1}$: we are maximizing the total utility obtained from excluding each point $z$ at a certain level, as measured by $\mu^{|Z^n}$, in expectation over $Z^n$:
                \begin{align*}
                    \E^{P_*^{Z^n}}\left[\int_{\mathcal{Z}} U(\mathcal{E}^{Z^n}(z))\,d\mu^{|Z^n}(z)\right].
                \end{align*}

                We formalize this in Theorem \ref{thm:most_powerful_e-value}.
                We omit its proof, as it is analogous to that of Theorem \ref{thm:most_powerful}.
                For an illustrative example in the Gaussian setting, see Example~\ref{exm:fuzzy_gaussian} in Appendix~\ref{sec:ex_fuzzy}.
                
                \begin{thm}\label{thm:most_powerful_e-value}
                    Let $\mu^{|Z^n}$ be a $Z^n$-dependent probability kernel, and $U : [0, \infty] \to [-\infty, \infty]$.
                    Then $\mathcal{E}^{Z^n}$ maximizes
                    \begin{align*}
                        \E^{P_*^{Z^n}}\left[\int_{\mathcal{Z}} U(\mathcal{E}^{Z^n}(z))\,d\mu^{|Z^n}(z)\right],
                    \end{align*}
                    if and only if $\mathcal{E}$ is the $U$-expected-utility optimal e-value against the alternative $Q = P_*^{Z^n} \otimes \mu^{|Z^n}$.
                \end{thm}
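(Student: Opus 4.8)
The plan is to reduce this to Theorem~\ref{thm:most_powerful} by the same device used in its proof, with binary tests replaced by e-values. The starting point is Theorem~\ref{thm:equivalence_fuzzy_confidence_e-value}: a fuzzy confidence set $\mathcal{E}$ is in bijection with an e-value $\varepsilon : \mathcal{Z}^{n+1} \to [0, \infty]$ via $\mathcal{E}^{z^n}(z) = \varepsilon(z_1, \dots, z_n, z)$, and $\mathcal{E}$ is valid for $\mathcal{P}$ if and only if $\varepsilon$ is. Hence, once the two problems are transported through this bijection, they range over the same feasible set (valid fuzzy confidence sets $\leftrightarrow$ e-values valid for $\mathcal{P}$), and it suffices to check that their objective functionals coincide.

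First I would substitute $\mathcal{E}^{Z^n}(z) = \varepsilon(Z^n, z)$ into the displayed functional and expand it using the definition of the kernel product $Q = P_*^{Z^n} \otimes \mu^{|Z^n}$ on $\mathcal{Z}^{n+1}$. Applying Tonelli's theorem — legitimate because $U$ is non-negative and measurable (being concave, it is continuous on $(0,\infty)$) and $\varepsilon$ is jointly measurable on $\mathcal{Z}^{n+1}$ — gives
\begin{align*}
    \E^{P_*^{Z^n}}\left[\int_{\mathcal{Z}} U(\mathcal{E}^{Z^n}(z))\,d\mu^{|Z^n}(z)\right]
        &= \int_{\mathcal{Z}^n}\int_{\mathcal{Z}} U(\varepsilon(z^n, z))\,d\mu^{|z^n}(z)\,dP_*^{Z^n}(z^n) \\
        &= \int_{\mathcal{Z}^{n+1}} U(\varepsilon)\,d(P_*^{Z^n} \otimes \mu^{|Z^n}) \\
        &= \E^{Q}[U(\varepsilon)].
\end{align*}
Thus maximizing the displayed functional over valid fuzzy confidence sets is the same problem as maximizing $\E^{Q}[U(\varepsilon)]$ over e-values valid for $\mathcal{P}$, which is by definition the $U$-expected-utility optimal e-value against $Q$. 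Taking $\argmax$ on both sides and pushing it through the bijection of Theorem~\ref{thm:equivalence_fuzzy_confidence_e-value} yields the stated equivalence.

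The only step that needs real care is the Tonelli interchange, together with the implicit requirement that $z^n \mapsto \int_{\mathcal{Z}} U(\varepsilon(z^n, z))\,d\mu^{|z^n}(z)$ be measurable so that the outer expectation is well defined; both are standard facts for probability kernels and non-negative integrands, and in particular no integrability hypothesis on $U(\varepsilon)$ is needed because $U \geq 0$. Beyond that, the argument is a direct transcription of the proof of Theorem~\ref{thm:most_powerful}, with $(1 - \phi)$ replaced by $U(\varepsilon)$ and $\argmin$ replaced by $\argmax$, so I do not expect any further obstacle.
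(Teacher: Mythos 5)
Your proposal is correct and is exactly the argument the paper intends: the authors omit the proof, stating it is ``analogous to that of Theorem \ref{thm:most_powerful},'' and your transcription --- replacing the test/confidence-set equivalence by the e-value/fuzzy-set equivalence of Theorem \ref{thm:equivalence_fuzzy_confidence_e-value} and rewriting the iterated integral as $\E^{Q}[U(\varepsilon)]$ for $Q = P_*^{Z^n} \otimes \mu^{|Z^n}$ --- is precisely that analogue. Your added care about Tonelli and measurability for the kernel product is a harmless (and slightly more rigorous) elaboration of the same route.
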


        \subsection{Merging fuzzy prediction sets}
            Fuzzy prediction sets inherit the desirable merging properties of their underlying e-values.
            
            If we have two arbitrary (possibly dependent) fuzzy prediction sets $\mathcal{E}_1^{Z^n}$ and $\mathcal{E}_2^{Z^n}$ for $Z_{n+1}$, then their average is still a fuzzy prediction set.
            Indeed, their underlying e-values, say $\mathcal{E}_1$ and $\mathcal{E}_2$, still average to a valid e-value: $
                \overline{\mathcal{E}}
                    = (\mathcal{E}_1 + \mathcal{E}_2) / 2.$
            The resulting prediction set $\overline{\mathcal{E}}^{Z^{n}}$ is then a fuzzy prediction set for $Z_{n+1}$.
            In fact, this is equivalent to simply averaging the fuzzy prediction sets $\mathcal{E}_1^{Z^n}$ and $\mathcal{E}_2^{Z^n}$ for $Z_{n+1}$.
            This extends from averaging a finite number of prediction sets to mixtures over collections of prediction sets.
            \citet{wang2025onlyadmissible} recently proved that (weighted) averaging (possibly with the constant e-value $\mathcal{E} \equiv 1$) is the only admissible way to merge arbitrarily dependent e-values.
            \citet{clerico2025simple} provides a simpler proof.

            Such arbitrarily dependent fuzzy prediction sets may be different fuzzy prediction sets produced on the same data.
            For example, this enables us to construct an optimal fuzzy prediction set for many choices of $Q$ and produce a `mixture' fuzzy prediction set that weights our preference over options of $Q$.
            Or it allows us to average over split-conformal fuzzy prediction sets over different choices of the split \citep{vovk2024conformal}.

            As non-fuzzy prediction sets (possibly of different levels) can be viewed as special fuzzy prediction sets, they can also be merged in the same manner, but the merged outcome will generally be a fuzzy prediction set.
            Rounding down such a merged fuzzy prediction set to a non-fuzzy prediction set usually discards a lot of evidence.
            This is equivalent to the procedure recently proposed by \citet{xu2025aggregating} for non-fuzzy prediction sets.

            Another popular method to merge e-values is through multiplication, which is valid under independence.
            While averaging fuzzy prediction sets cannot increase evidence beyond the maximum evidence for any of the individual prediction sets, multiplying them does have this potential.
            At the same time, multiplication may also kill-off evidence if one of the fuzzy prediction sets equals zero at some points.
            Non-fuzzy prediction sets suffer from this issue, as they have zero-values by construction.
            The assumption of independence may be weakened to a kind of conditional mean-independence property of one of the fuzzy prediction sets, given the other fuzzy prediction sets: $\E[\mathcal{E}_i \mid \mathcal{E}_1, \dots, \mathcal{E}_{i-1}, \mathcal{E}_{i+1}, \dots] \leq 1$, $i \geq 1$ \citep{ming2026optimized}.

        \section{Optimal conformal prediction}\label{sec:exchangeable_conformal}
            In this section, we specialize our methodology to conformal prediction, where $\mathcal{P}$ is the class of exchangeable distributions on $\mathcal{Z}^{n+1}$.
            We cover both optimality for classical conformal prediction sets and more general fuzzy conformal prediction sets.

            As we showed in Section \ref{sec:fuzzy_is_different_alpha}, a valid fuzzy conformal prediction set for the model $\mathcal{P}$ corresponds to a valid e-value $\mathcal{E}$ for `hypothesis' $\mathcal{P}$.
            Moreover, in Section \ref{sec:optimal_fuzzy} we show that the fuzzy prediction set is optimal if $\mathcal{E}$ is optimal for some expected-utility target $\E^{Q}[U(\mathcal{E})]$.
            As a result, we may reduce the construction of optimal (fuzzy) conformal prediction sets to constructing optimal e-values for exchangeability.
            This means we may apply recent results on optimal e-values for exchangeability from \citet{koning2023post}.
            
            \subsection{Background: permutations, orbits and exchangeability}
                Let $\Pi(n+1)$ denote the group of permutations on $n+1$ elements.
                This group acts on $\mathcal{Z}^{n+1}$ by permuting the elements of each tuple $(z_1, \dots, z_{n+1}) \in \mathcal{Z}^{n+1}$.
                The group action partitions $\mathcal{Z}^{n+1}$ into a set $\mathcal{O}^{n+1}$ of disjoint \emph{orbits} $O \in \mathcal{O}^{n+1}$.
                For a given $z^{n+1} \in \mathcal{Z}^{n+1}$, its orbit is the set of all its permutations:
                \begin{align*}
                    O(z^{n+1})
                        = \{z \in \mathcal{Z}^{n+1} : z = \pi z^{n+1}, \textnormal{ for some } \pi \in \Pi(n+1)\}.
                \end{align*}
                On each orbit, we designate a single tuple as its \emph{orbit representative}, and we define the map $[\cdot]$ as the map that carries any point on the orbit to this representative.

                We say that a random variable $Z^{n+1}$ on $\mathcal{Z}^{n+1}$ is exchangeable if
                \begin{align*}
                    Z^{n+1} \overset{d}{=} \pi Z^{n+1}, \textnormal{ for every } \pi \in \Pi(n+1).
                \end{align*}
                Note that this is actually a statement about its distribution $P^{Z^{n+1}}$, which we say is exchangeable if it is the law of an exchangeable random variable.
                With $\mathcal{P}^{\textnormal{exch.}}$, we denote the collection of exchangeable distributions on $\mathcal{Z}^{n+1}$.
                Throughout, we use the following well-known result.
                \begin{lem}\label{lem:representation_invariance}
                    $P$ is exchangeable if and only if $P^{|O} = \textnormal{Unif}(O)$, $P^{O}$-a.s.
                \end{lem}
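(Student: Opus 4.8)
The plan is to exploit the fact that each orbit $O \in \mathcal{O}^{n+1}$ is a \emph{finite} set — it contains at most $(n+1)!$ points, the distinct permutations of any of its members — on which $\Pi(n+1)$ acts \emph{transitively}. Hence $\textnormal{Unif}(O)$ is well defined, and any probability measure $\nu$ on $O$ with $\nu = \pi_*\nu$ for all $\pi \in \Pi(n+1)$ is automatically $\textnormal{Unif}(O)$: a coordinate permutation $\pi$ is a bijection of $\mathcal{Z}^{n+1}$, hence restricts to a bijection of $O$; so for any $z,z' \in O$, picking $\pi$ with $\pi z = z'$ gives $\nu(\{z'\}) = \pi_*\nu(\{z'\}) = \nu(\{z\})$, and since $O$ is finite $\nu$ must be uniform. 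Everything else is bookkeeping with the disintegration of $P$ along the measurable quotient map $z^{n+1} \mapsto O(z^{n+1})$: writing $P^{\mathcal{O}}$ for the law of the orbit $O(Z^{n+1})$ and $P^{|O}$ for the regular conditional law given the orbit, we have
\[
    P = \int_{\mathcal{O}^{n+1}} P^{|O}\, dP^{\mathcal{O}}(O),
\]
which exists under the usual standard-Borel assumption on $\mathcal{Z}$. Note that when $\mathcal{Z}$ is continuous every individual orbit is $P$-null, so $P^{|O}$ must be read in this disintegration sense and the asserted equality $P^{|O} = \textnormal{Unif}(O)$ is understood for $P^{\mathcal{O}}$-almost every $O$ (equivalently, for the canonical version, for every $O$).

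For the forward direction, suppose $P$ is exchangeable, i.e. $\pi_* P = P$ for every $\pi \in \Pi(n+1)$. Since each orbit is a $\Pi(n+1)$-invariant subset of $\mathcal{Z}^{n+1}$, $\pi$ carries orbits to themselves, so $P^{\mathcal{O}}$ is unchanged under $\pi$ and pushing the disintegration forward yields
\[
    P = \pi_* P = \int_{\mathcal{O}^{n+1}} \pi_* P^{|O}\, dP^{\mathcal{O}}(O).
\]
By essential uniqueness of the disintegration, $\pi_* P^{|O} = P^{|O}$ for $P^{\mathcal{O}}$-a.e. $O$; intersecting over the finitely many $\pi \in \Pi(n+1)$ gives a single full-measure set of orbits on which $P^{|O}$ is $\Pi(n+1)$-invariant, hence equal to $\textnormal{Unif}(O)$ by the remark above.

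For the converse, suppose $P^{|O} = \textnormal{Unif}(O)$ for ($P^{\mathcal{O}}$-a.e.) $O$, so that $P = \int_{\mathcal{O}^{n+1}} \textnormal{Unif}(O)\, dP^{\mathcal{O}}(O)$. Fix $\pi \in \Pi(n+1)$ and a measurable $A \subseteq \mathcal{Z}^{n+1}$. Because $\pi$ restricts to a bijection of each finite orbit $O$ onto itself, $\textnormal{Unif}(O)(\pi^{-1}A) = \textnormal{Unif}(O)(A)$, and since orbits are $\pi$-invariant the outer integral against $P^{\mathcal{O}}$ is untouched, so
\[
    \pi_* P(A) = \int_{\mathcal{O}^{n+1}} \textnormal{Unif}(O)(\pi^{-1}A)\, dP^{\mathcal{O}}(O) = \int_{\mathcal{O}^{n+1}} \textnormal{Unif}(O)(A)\, dP^{\mathcal{O}}(O) = P(A).
\]
As $\pi$ and $A$ were arbitrary, $P = \pi_* P$, i.e. $P$ is exchangeable.

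The only genuine subtlety — and hence the step I would treat most carefully — is the measure-theoretic one: invoking the disintegration and its essential uniqueness so that ``conditioning on the orbit'' is meaningful even in the continuous case where each orbit carries zero mass, and correspondingly reading ``for every $O$'' as ``for $P^{\mathcal{O}}$-a.e. $O$'' (or as a statement about the canonical version of $P^{|O}$). Granting that, the heart of the argument is the elementary observation that invariance under a transitive finite group action forces the uniform measure.
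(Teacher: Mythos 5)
Your proof is correct. The paper itself offers no proof of this lemma --- it is stated as a ``well-known result'' and used without justification --- so there is nothing to compare against; your argument (transitive finite group action forces uniformity of any invariant measure on an orbit, combined with disintegration of $P$ along the orbit map and its essential uniqueness) is the standard way to establish it, and your caveat that ``for every $O$'' must be read as ``for $P^{\mathcal{O}}$-almost every $O$'' (or via a canonical version) in the continuous case is a genuine point of care that the paper's bare statement glosses over.
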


            \subsection{Optimal conformal}
                The key idea to derive optimal e-values for exchangeability is to reduce to a problem on each orbit.
                Indeed, Theorem \ref{thm:representation_optimality}, shows that if $\mathcal{E}$ is `locally' valid and optimal on each orbit, then it is also `globally' valid and optimal.
                The result follows from Theorem 5 in \citet{koning2023post},

                \begin{thm}\label{thm:representation_optimality}
                    If $\mathcal{E}$ is valid for $\textnormal{Unif}(O)$ and optimal for $\E^{Q^{|O}}[U(\cdot)]$, for every $O$, then $\mathcal{E}$ is valid for $\mathcal{P}^{exch.}$ and optimal against $\E^{Q}[U(\mathcal{E})]$.
                    Moreover, $\mathcal{E}$ is optimal uniformly in mixtures over $Q^{| O}$.
                \end{thm}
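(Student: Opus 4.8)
The plan is to exploit that both the validity constraint and the expected-utility objective \emph{disintegrate over the orbits}, so that the global optimization decouples into exactly the per-orbit optimizations that the hypotheses assume are solved by the $\varepsilon_O$. For a distribution $P$ on $\mathcal{Z}^{n+1}$, I will write $P^{\mathcal{O}}$ for the law of the random orbit $O(Z^{n+1})$ (the pushforward of $P$ under the quotient map $\mathcal{Z}^{n+1} \to \mathcal{O}^{n+1}$) and $P^{|O}$ for the regular conditional distribution of $Z^{n+1}$ given its orbit, and similarly for $Q$. I will take the existence and regularity of these disintegrations for granted, as in \citet{koning2023post}.

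First I would establish validity. For any exchangeable $P$, Lemma \ref{lem:representation_invariance} gives $P^{|O} = \textnormal{Unif}(O)$ for every orbit $O$, so by the tower property
\begin{align*}
    \E^P[\varepsilon] = \int_{\mathcal{O}^{n+1}} \E^{P^{|O}}[\varepsilon_O]\,dP^{\mathcal{O}}(O) = \int_{\mathcal{O}^{n+1}} \E^{\textnormal{Unif}(O)}[\varepsilon_O]\,dP^{\mathcal{O}}(O) \leq 1,
\end{align*}
using that each $\varepsilon_O$ is valid for $\textnormal{Unif}(O)$ and that $P^{\mathcal{O}}$ is a probability measure. Hence $\varepsilon$ is valid for $\mathcal{P}^{\textnormal{exch.}}$.

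Next, and this is the structural heart of the argument, I would observe that $\textnormal{Unif}(O)$ is itself exchangeable (it is supported on the single orbit $O$, on which it is uniform), so it lies in $\mathcal{P}^{\textnormal{exch.}}$. Consequently any competing e-value $\varepsilon'$ that is valid for $\mathcal{P}^{\textnormal{exch.}}$ must satisfy $\E^{\textnormal{Unif}(O)}[\varepsilon'_O] = \E^{\textnormal{Unif}(O)}[\varepsilon'] \leq 1$ for every $O$; together with the previous step this shows the feasible set of e-values for $\mathcal{P}^{\textnormal{exch.}}$ coincides with the set of orbit-wise gluings of conditional e-values, each valid for its own $\textnormal{Unif}(O)$, with no constraint coupling distinct orbits. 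For such an $\varepsilon'$ the tower property again gives $\E^Q[U(\varepsilon')] = \int_{\mathcal{O}^{n+1}} \E^{Q^{|O}}[U(\varepsilon'_O)]\,dQ^{\mathcal{O}}(O)$, while the orbit-wise optimality of $\varepsilon_O$ yields
\begin{align*}
    \E^{Q^{|O}}[U(\varepsilon_O)] \geq \E^{Q^{|O}}[U(\varepsilon'_O)] \quad \textnormal{for every } O.
\end{align*}
Integrating this pointwise inequality against $dQ^{\mathcal{O}}$ gives $\E^Q[U(\varepsilon)] \geq \E^Q[U(\varepsilon')]$, the desired optimality. Since the displayed inequality holds orbit-by-orbit and makes no reference to $Q^{\mathcal{O}}$, integrating it instead against an arbitrary orbit-marginal $\nu$ shows $\varepsilon$ is simultaneously optimal against every alternative sharing the conditionals $Q^{|O}$ — the claimed uniform optimality over mixtures.

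The main obstacle will be measure-theoretic bookkeeping rather than ideas: I would need to justify the existence and measurability of the regular conditionals $P^{|O}$ and $Q^{|O}$ for general (say Polish) $\mathcal{Z}$, the measurability of the glued map $z^{n+1} \mapsto \varepsilon_{O(z^{n+1})}(z^{n+1})$ — which is where the orbit-representative map $[\cdot]$ and a measurable selection of the per-orbit optimizers enter — and Fubini / the tower property for the $[0,\infty]$-valued integrands $\varepsilon_O$ and $U(\varepsilon_O)$. These are exactly the technicalities handled in the proof of Theorem 5 of \citet{koning2023post}, which I would cite or adapt rather than reprove.
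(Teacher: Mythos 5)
The paper does not prove this theorem: it is imported verbatim (``specialized from Theorem 5 in \citet{koning2023post}'') and no proof appears in the main text or appendix, so there is no in-paper argument to compare yours against. That said, your disintegration argument is correct and is the natural route to this result. The two load-bearing observations are both present and both right: (i) for the validity direction, Lemma \ref{lem:representation_invariance} gives $P^{|O} = \textnormal{Unif}(O)$ for every exchangeable $P$, so orbit-wise validity integrates to global validity; and (ii) for the optimality direction, $\textnormal{Unif}(O)$ is itself a member of $\mathcal{P}^{\textnormal{exch.}}$, which forces every competing valid $\varepsilon'$ to be orbit-wise valid and hence makes the feasible set a product over orbits with no coupling constraint --- without (ii) one could only conclude that $\varepsilon$ beats competitors that happen to be orbit-wise valid, not all valid competitors. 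The uniform-optimality-over-mixtures claim then falls out for free because the orbit-wise inequality $\E^{Q^{|O}}[U(\varepsilon_O)] \geq \E^{Q^{|O}}[U(\varepsilon'_O)]$ holds pointwise in $O$ and can be integrated against any orbit marginal. The remaining gaps you flag (existence of the disintegrations, measurable selection of the per-orbit optimizers so that the glued $\varepsilon$ is measurable, and the tower property for $[0,\infty]$-valued integrands) are real but are exactly the technicalities the cited reference is responsible for, and deferring them is appropriate given that the paper itself treats the result as external.
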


                To establish orbitwise optimality, we use the ``Neyman--Pearson lemma for e-values'' recently derived in \citet{koning2025continuoustestingunifyingtests}.
                The classical Neyman--Pearson lemma corresponds to the choice $U(x) = x \wedge 1/\alpha$.
                The version we present in Theorem \ref{thm:generalized-NP-lemma} specializes to the structure at hand: $O$ is finite and $Q^{|O}$ is absolutely continuous with respect to $\textnormal{Unif}(O)$, since $\textnormal{Unif}(O)$ has full support.
                Corollary \ref{cor:differentiable} presents a more easily interpretable version under some regularity conditions on $U$ and $Q^{|O}$.

                \begin{thm}\label{thm:generalized-NP-lemma}
                    Let $U : [0, \infty] \to [-\infty, \infty]$ be concave, non-decreasing and upper-semicontinuous.
                    Then, the optimization problem
                    \begin{align*}
                        \sup_{\mathcal{E}} \E^{Q^{|O}}[U(\mathcal{E})], \textnormal{ s.t. } \mathcal{E} : \mathcal{X} \to [0, \infty],\ \E^{\textnormal{Unif}(O)}[\mathcal{E}] \leq 1,
                    \end{align*}
                    admits an optimal solution.
                    Moreover, if $\mathcal{E}^*$ is an optimizer, then there exists a normalization constant $\lambda_O \geq 0$ such that
                    \begin{align*}
                        \lambda_O \bigg/ \frac{dQ^{|O}}{d\textnormal{Unif}(O)} &\in \partial U(\mathcal{E}^*),  
                    \end{align*}
                    on $\{Q^{|O} > 0\}$ and $\mathcal{E}^* = 0$ on $\{Q^{|O} = 0\}$, where $\partial U$ is the superdifferential of $U$.
                \end{thm}

                \begin{cor}\label{cor:differentiable}
                    If $Q^{|O}$ has full support on $O$ and $U$ is differentiable with strictly decreasing derivative $U'$, then an optimizer is
                    \begin{align*}
                        \mathcal{E}^* = (U')^{-1}\left(\lambda_O \bigg/ \frac{dQ^{|O}}{d\textnormal{Unif}(O)}\right),
                    \end{align*}
                    where $(U')^{-1}$ denotes the functional inverse of $U'$.
                \end{cor}

                A consequence of Theorem \ref{thm:generalized-NP-lemma} is that the restriction of $\mathcal{E}_{|O}^*$ of $\mathcal{E}^*$ to the orbit $O$ only depends on the data through the orbit-conditional likelihood ratio $\textnormal{LR}^{|O} := dQ^{|O} / d\textnormal{Unif}(O)$.
                In Proposition \ref{prp:conditional_LR}, we use the structure of $Q = P^{Z^n} \otimes \mu^{|Z^n}$ to show that this conditional likelihood ratio only depends on $Z^{n+1}$ through $Z_{n+1}$.
                
                \begin{prp}\label{prp:conditional_LR}
                    If $Z^n$ is exchangeable under $Q$, then 
                    \begin{align*}
                        \textnormal{LR}^{|O}(z^{n+1})
                            = \frac{dQ^{|O}}{d\textnormal{Unif}(O)}(z^{n+1}) 
                            = \frac{Q^{Z_{n+1} | O}(z_{n+1})}{1/(n+1)}
                            =: \textnormal{LR}^{Z_{n+1}|O}(z_{n+1}).
                    \end{align*}
                \end{prp}

                A consequence of Proposition \ref{prp:conditional_LR} is that the restriction $\mathcal{E}_{|O}^*$ does not depend on $Z^n$.
                Indeed, we are implicitly considering the following hypotheses on each orbit:
                \begin{align*}
                    H_0^O : Z_{n+1} \sim \textnormal{Unif}[Z^{n+1}], \quad
                    H_1^O : Z_{n+1} \sim Q^{Z_{n+1}|O}.
                \end{align*}
                By extension, the global e-value $\mathcal{E}^*$ only depends on $Z^n$ through the orbit $O(Z^{n+1})$.
                
                \begin{rmk}[Conformal with orbit-level ranks]\label{rmk:conditional_ranks}
                    As optimal e-values only depend on the position of $Z_{n+1}$ in $[Z^{n+1}]$, we may equivalently express all the results in terms of the rank of $Z_{n+1}$ among $[Z^{n+1}]$.
                    To define a rank for arbitrary (non-numerical) data, we may define it relative to the orbit representative $[Z^{n+1}]$.
                    In particular, we define $\textnormal{Rank}(Z^{n+1})$ as the index at which each element in the tuple $Z^{n+1}$ needs to be placed to obtain the representative $[Z^{n+1}]$ of $O$.
                    This specializes to the conventional rank for data that has a natural order, if $[Z^{n+1}]$ is sorted in ascending order.

                    Let $R_{n+1}^O$ denote last element of the tuple $\textnormal{Rank}(Z^{n+1})$, which corresponds to the rank of $Z_{n+1}$ among $[Z^{n+1}]$.
                    We may then equivalently express the reduced orbit-level hypotheses as
                    \begin{align*}
                        H_0^O : R_{n+1}^O \sim \textnormal{Unif}\{1, \dots, n+1\}, \quad 
                        H_1^O : R_{n+1}^O \sim Q_R^O,
                    \end{align*}
                    where $Q_R^O$ is some arbitrary orbit-dependent distribution on $\{1, \dots, n+1\}$.

                    In the literature on conformal prediction it is popular to express everything in terms of unconditional ranks (sometimes confusingly named p-values).
                    Using such unconditional ranks exactly discards the information about the orbit.
                \end{rmk}
        
            \subsection{Illustration: i.i.d. and split-conformal}\label{sec:Q_to_LR}
                As the conditional likelihood ratio is central in finding the optimal e-value, we first illustrate what such a conditional likelihood ratio may look like in a popular setting, before we proceed with several examples of utility functions.

                In Proposition \ref{prp:conditional_LR_indep}, we consider optimality against an i.i.d. alternative of the form $Q = P^{Z^n} \times \mu$ where $P^{Z^{n}} = (P^{Z_1})^{n}$.
                For example, such an alternative may arise by plugging-in an i.i.d. estimator $\widehat{P}^{Z^n} = (\widehat{P}^{Z_1})^{n}$ for $P_*^{Z^{n}}$, based on a separate sample akin to the popular `split-conformal prediction' approach.

                \begin{prp}\label{prp:conditional_LR_indep}
                    Suppose that $Q$ is so that $Z^{n+1}$ is independent and $Z^n$ is i.i.d., with $Q^{Z_{n+1}}$ and $Q^{Z_1}$ mutually absolutely continuous.
                    Then,
                    \begin{align*}
                        &\textnormal{LR}^{Z_{n+1}|O(z^{n+1})}(z_{n+1}) 
                            = \frac{dQ^{Z_{n+1}}}{dQ^{Z_{1}}}(z_{n+1}) \Bigg/ \left(\frac{1}{n+1} \sum_{i = 1}^{n+1} \frac{dQ^{Z_{n+1}}}{dQ^{Z_{1}}}(z_{i})\right).
                    \end{align*}
                \end{prp}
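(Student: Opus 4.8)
The plan is to assemble the claim from the orbit-conditional reduction already recorded in \eqref{eq:conditional_LR_z_n+1}. Since $Z^n$ is i.i.d.\ under $Q$, it is in particular exchangeable, so Proposition \ref{prp:conditional_alternative} applies and \eqref{eq:conditional_LR_z_n+1} gives $\textnormal{LR}^{|O}(z^{n+1}) = (n+1)\,Q^{Z_{n+1}\mid O}(z_{n+1})$. It therefore suffices to identify the orbit-conditional law $Q^{Z_{n+1}\mid O}$ of the last coordinate when the $n+1$ coordinates are independent and the first $n$ are identically distributed.

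First I would fix a convenient dominating measure. Mutual absolute continuity of $Q^{Z_{n+1}}$ and $Q^{Z_1}$ lets me set $w := dQ^{Z_{n+1}}/dQ^{Z_1}$, defined and strictly positive $Q^{Z_1}$-a.e. Taking $\nu := Q^{Z_1}$ as reference on $\mathcal{Z}$, the independence-plus-identical-distribution structure gives that $Q$ has $\nu^{\otimes(n+1)}$-density $(z_1,\dots,z_{n+1})\mapsto \prod_{i=1}^n \tfrac{dQ^{Z_1}}{d\nu}(z_i)\cdot\tfrac{dQ^{Z_{n+1}}}{d\nu}(z_{n+1}) = w(z_{n+1})$; that is, the joint density depends on the final coordinate only.

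Next I would disintegrate over orbits. Under the product reference $\nu^{\otimes(n+1)}$, the orbit-conditional law of $Z^{n+1}$ is $\textnormal{Unif}(O)$ by Lemma \ref{lem:representation_invariance}. Hence the regular conditional law of $Z^{n+1}$ given $O$ under $Q$ has $\textnormal{Unif}(O)$-density proportional to the joint density, i.e.\ $z^{n+1}\mapsto w(z_{n+1})\big/\big(\tfrac{1}{|O|}\sum_{z'\in O}w(z'_{n+1})\big)$, and this is $\textnormal{LR}^{|O}$. It then remains to evaluate the normalizing constant combinatorially: by symmetry of the orbit under permutations, the fraction of tuples $z'\in O$ whose last coordinate equals a given value is the multiplicity of that value divided by $n+1$, so $\tfrac{1}{|O|}\sum_{z'\in O}w(z'_{n+1}) = \tfrac{1}{n+1}\sum_{i=1}^{n+1}w(z_i)$, regardless of ties. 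Substituting gives $\textnormal{LR}^{|O(z^{n+1})}(z^{n+1}) = w(z_{n+1})\big/\big(\tfrac{1}{n+1}\sum_{i=1}^{n+1}w(z_i)\big)$, which is the claim, and equivalently $Q^{Z_{n+1}\mid O}(z_{n+1}) = w(z_{n+1})/\sum_{i=1}^{n+1}w(z_i)$, consistent with $\textnormal{LR}^{|O} = (n+1)\,Q^{Z_{n+1}\mid O}$.

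The main obstacle I anticipate is making the disintegration step fully rigorous: conditioning on the orbit is conditioning on a $Q$-null event, so the identity ``conditional density $=$ ratio of joint densities normalized over the orbit'' has to be justified through a regular conditional probability argument --- choosing the reference $\nu^{\otimes(n+1)}$ precisely because its orbit-conditionals are the uniform measures, and transferring to $Q$ via the Radon--Nikodym derivative that depends only on $z_{n+1}$ --- and the combinatorial normalization must be phrased so that it remains valid when the coordinates of $z^{n+1}$ are not all distinct.
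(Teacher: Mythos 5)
Your proof is correct, but it is organized differently from the paper's. The paper works globally on $\mathcal{Z}^{n+1}$: it forms the symmetrization $\overline{Q} = \frac{1}{|\Pi(n+1)|}\sum_{\pi}\pi Q$, observes that by the product structure this collapses to the finite mixture $\frac{1}{n+1}\sum_{i=1}^{n+1} Q^{(i)}$ (where $Q^{(i)}$ puts the marginal $Q^{Z_{n+1}}$ in slot $i$), invokes Proposition 3 of \citet{koning2023post} to identify $\textnormal{LR}^{|O}$ with the restriction of $dQ/d\overline{Q}$ to the orbit, and then finishes by elementary Radon--Nikodym arithmetic with $dQ^{(i)}/dQ$. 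You instead work orbit-locally: you choose the i.i.d.\ reference $\nu^{\otimes(n+1)}$ with $\nu = Q^{Z_1}$ so that the joint density is $w(z_{n+1})$, use Lemma \ref{lem:representation_invariance} to get $\textnormal{Unif}(O)$ as the reference's orbit-conditional, disintegrate, and evaluate the normalizer $\frac{1}{|O|}\sum_{z'\in O}w(z'_{n+1}) = \frac{1}{n+1}\sum_i w(z_i)$ by the multiplicity count. The two routes meet at the same combinatorial identity (your orbit average of $w$ is exactly $d\overline{Q}/d\nu^{\otimes(n+1)}$ restricted to $O$), but yours is more self-contained --- it does not lean on the external result identifying $\textnormal{LR}^{|O}$ with $dQ/d\overline{Q}$ --- at the price of having to justify the disintegration through regular conditional probabilities, a burden you correctly flag and which the paper sidesteps by citing \citet{koning2023post}. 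Your explicit handling of ties via multiplicities is also a point the paper's proof leaves implicit in the passage from $(n+1)!$ permutations to $n+1$ mixture components.
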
 

                Proposition \ref{prp:conditional_LR_indep} may be interpreted as stating that
                \begin{align*}
                    \textnormal{LR}^{Z_{n+1}|O(z^{n+1})}(z_{n+1})
                        \propto \frac{d\mu}{d\widehat{P}^{Z_{1}}}(z_{n+1}),
                \end{align*}
                where the proportionality is up to some orbit-dependent constant.
                Viewing $\mu$ as some reference measure, an interpretation of this result is that the optimal e-value is decreasing in the marginal $\widehat{P}^{Z_1}$-density at $z_{n+1}$.
                We believe this makes intuitive sense, as values of $z_{n+1}$ that have low density may be interpreted as being `outlying' or `non-conforming'.

            \subsection{Log-utility and power utility}
                In the e-value literature, the near-universal choice of utility is the log-utility $U : x \mapsto \log(x)$, for which the optimal e-value is the orbit-conditional likelihood ratio itself:
                \begin{align*}
                    \mathcal{E}^{\log}(z^{n+1})
                        = \textnormal{LR}^{Z_{n+1}|O(z^{n+1})}(z_{n+1}).
                \end{align*}
                This generalizes to the power-utility $U : x \mapsto (x^h - 1)/h$, $h < 1$, $h \neq 0$, with optimizer
                \begin{align*}
                    \mathcal{E}^h(z^{n+1})
                        = \frac{(\textnormal{LR}^{Z_{n+1}|O(z^{n+1})}(z_{n+1}))^{\frac{1}{1-h}}}{\frac{1}{n+1}\sum_{j = 1}^{n+1} (\textnormal{LR}^{Z_{n+1}|O(z^{n+1})}(z_{j}))^{\frac{1}{1-h}}}.
                \end{align*}
                The log-utility appears as the $h \to 0$ limit.

                \begin{rmk}
                    In the setting of Section \ref{sec:Q_to_LR}, this amounts to choosing $T(z_{n+1}) = \left(\frac{d\mu}{d\widehat{P}^{Z_{1}}}(z_{n+1})\right)^{1 / (1 - h)}$ in \eqref{eq:T-based_e-value}.
                \end{rmk}

            \subsection{Optimal classical conformal}\label{sec:classical_conformal}
                 In Theorem \ref{thm:NP_style}, we show how our optimality framework nests classical conformal prediction as a special case if we choose the Neyman--Pearson utility function $U_\alpha^{\textnormal{NP}} : x \mapsto x \wedge 1/\alpha$, $\alpha \in (0, 1)$.
                 This nests classical conformal prediction for any non-conformity score that is a monotone function of $\textnormal{LR}^{Z_{n+1}|O(z^{n+1})} = (n + 1) \times Q^{Z_{n+1} | O(z^{n+1})}$.
            
                \begin{thm}\label{thm:NP_style}
                    Write $O'$ for $O(z^{n+1})$.
                    A $U_\alpha^{\textnormal{NP}}$-optimal e-value is
                    \begin{align*}
                        \mathcal{E}^{\textnormal{NP}}(z^{n+1})
                            =
                            \begin{cases}
                                1/\alpha, &\textnormal{ if } \textnormal{LR}^{Z_{n+1}|O'}(z_{n+1}) > c_\alpha^{O'}, \\
                                k^{O'}, \hspace{-.4cm}&\textnormal{ if } \textnormal{LR}^{Z_{n+1}|O'}(z_{n+1}) = c_\alpha^{O'}, \\
                                0, &\textnormal{ if } \textnormal{LR}^{Z_{n+1}|O'}(z_{n+1}) < c_\alpha^{O'},
                            \end{cases}
                    \end{align*}
                    where $c_\alpha^{O'}$ is the $\alpha$ upper-quantile of $\textnormal{LR}^{|O'}(z^{n+1})$ under $\textnormal{Unif}[Z^{n+1}]$, and $k^{O'}$ is some orbit-dependent constant that ensures $\mathcal{E}^{\textnormal{NP}}(Z^{n+1})$ is an exact e-value.
                    This e-value is optimal uniformly in any mixture over orbits, and in monotone transformations of the likelihood ratio.
                \end{thm}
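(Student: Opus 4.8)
The plan is to reduce the problem to an orbit-by-orbit Neyman--Pearson testing problem and then invoke the classical Neyman--Pearson lemma. First I would apply Theorem~\ref{thm:representation_optimality}: it suffices to exhibit, on each orbit $O$, a conditional e-value $\varepsilon_O$ that is valid for $\textnormal{Unif}(O)$ and that maximizes $\E^{Q^{|O}}[U_\alpha^{\textnormal{NP}}(\varepsilon_O)]$; the claimed uniformity over mixtures of orbits and over the true exchangeable law of $Z^n$ then comes for free from that theorem. On a fixed orbit I would use the reduction already recorded in \eqref{eq:conditional_LR_z_n+1}, together with the fact (from \citet{koning2025continuoustestingunifyingtests}) that expected-utility-optimal e-values are non-decreasing functions of the likelihood ratio, to restrict attention to e-values that depend on $z^{n+1}$ only through $\textnormal{LR}^{Z_{n+1}|O'}(z_{n+1})$; equivalently, the problem becomes the reduced test $H_0^{O}$ versus $H_1^{O}$, whose likelihood ratio is proportional to $\textnormal{LR}^{Z_{n+1}|O'}$, and for which $\E^{\textnormal{Unif}(O)}[\varepsilon_O] = \E^{H_0^{O}}[\varepsilon_O]$ and $\E^{Q^{|O}}[\varepsilon_O] = \E^{H_1^{O}}[\varepsilon_O]$.

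The second step is the dictionary between the $U_\alpha^{\textnormal{NP}}$-objective and power. Since $U_\alpha^{\textnormal{NP}}(x) = x \wedge 1/\alpha$, replacing $\varepsilon_O$ by $\varepsilon_O \wedge 1/\alpha$ leaves the objective unchanged while only decreasing $\E^{H_0^{O}}[\varepsilon_O]$, so without loss of optimality $\varepsilon_O$ takes values in $[0, 1/\alpha]$. Then $\phi_O := \alpha\,\varepsilon_O$ is a randomized test with $\E^{H_0^{O}}[\phi_O] \le \alpha$, and conversely any level-$\alpha$ randomized test yields such an e-value; under this correspondence $\E^{Q^{|O}}[\varepsilon_O \wedge 1/\alpha] = \E^{Q^{|O}}[\varepsilon_O] = \tfrac{1}{\alpha}\E^{H_1^{O}}[\phi_O]$ equals $1/\alpha$ times the power of $\phi_O$. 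Hence the $U_\alpha^{\textnormal{NP}}$-optimal $\varepsilon_O$ corresponds exactly to the most powerful level-$\alpha$ test of $H_0^{O}$ against $H_1^{O}$.

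The third step reads off the form of the optimizer from the Neyman--Pearson lemma: the most powerful level-$\alpha$ test rejects when $\textnormal{LR}^{Z_{n+1}|O'}$ exceeds its upper $\alpha$-quantile $c_\alpha^{O'}$, accepts when it falls below, and randomizes at the boundary with exactly the probability needed for size $\alpha$. Dividing by $\alpha$ yields the displayed $\varepsilon^{\textnormal{NP}}$, with the boundary constant $k^{O'}$ pinned down by $\E^{\textnormal{Unif}(O)}[\varepsilon^{\textnormal{NP}}] = 1$ — which is precisely the statement that the NP test has size exactly $\alpha$, so this constant is well-defined (modulo the usual degenerate cases where $\textnormal{LR}^{Z_{n+1}|O'}$ is constant on the orbit or $c_\alpha^{O'}$ is not attained, handled by the standard conventions). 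Optimality ``uniformly in any monotone transformation of the likelihood ratio'' then follows because the sets $\{\textnormal{LR}^{Z_{n+1}|O'} > c_\alpha^{O'}\}$ and $\{\textnormal{LR}^{Z_{n+1}|O'} < c_\alpha^{O'}\}$ are unchanged under any strictly increasing reparametrization, so the same test — hence the same e-value — is produced whether the problem is phrased in terms of $\textnormal{LR}^{Z_{n+1}|O'}$ or of any monotone conformity score, which is exactly the sense in which classical conformal prediction is recovered.

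I expect the main obstacle to be the careful bookkeeping in the second step: verifying that restricting to e-values that depend on $z^{n+1}$ only through $\textnormal{LR}^{Z_{n+1}|O'}$ is genuinely without loss of optimality (this is where the cited monotonicity result of \citet{koning2025continuoustestingunifyingtests} does the heavy lifting), and confirming that the ``exact e-value'' normalization is actually forced, so that $k^{O'}$ is determined rather than merely bounded and $\varepsilon^{\textnormal{NP}}$ is not a strict sub-e-value. The translation between validity for $\textnormal{Unif}(O)$ on the full orbit and validity under $H_0^{O}$ on the orbit representative also needs to be spelled out, but it is immediate once $\varepsilon_O$ is known to be a function of $z_{n+1}$ alone.
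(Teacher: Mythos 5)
Your argument is correct and follows essentially the route the paper intends: the paper gives no explicit proof of this theorem, but its surrounding text prescribes exactly your steps --- reduce to orbits via Theorem~\ref{thm:representation_optimality}, use \eqref{eq:conditional_LR_z_n+1} to pass to $H_0^{O}$ versus $H_1^{O}$, and identify the $U_\alpha^{\textnormal{NP}}$-optimal $[0,1/\alpha]$-valued e-value with $1/\alpha$ times the randomized Neyman--Pearson test (an equivalence already noted in Section~\ref{sec:optimal_fuzzy}). The bookkeeping you flag --- losslessness of truncating at $1/\alpha$, exactness pinning down $k^{O'}$, and invariance of the rejection region under monotone transformations of the likelihood ratio --- is handled correctly.
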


                To make this more interpretable, we provide Corollary \ref{cor:classical_conformal}, which combines Theorem \ref{thm:NP_style} with our result for the i.i.d. setting in Proposition \ref{prp:conditional_LR_indep}.
                Here, we consider an i.i.d. estimator $\widehat{P}^{Z^n} = (\widehat{P}^{Z_1})^n$ for $P_*^{Z^n}$, and minimize the expected measure of the prediction set
                \begin{align*}
                    \E^{\widehat{P}^{Z^n}}[\mu(C_\alpha^{Z^n})].
                \end{align*}
                Using $f = d\widehat{P}^{Z_{1}} / d\mu$ to denote the density, Corollary \ref{cor:classical_conformal} shows that classical conformal prediction is optimal in this sense when the conformity score $s$ is chosen as a monotone function of the density $f$.
                Conversely, given a conformity score $s$ and measure $\mu$, classical conformal prediction is optimal uniformly in the class of true distributions for which $s$ is a monotone function of the corresponding density $f$.

                \begin{cor}\label{cor:classical_conformal}
                    Suppose $\widehat{P}^{Z^{n}} = (\widehat{P}^{Z_1})^n$, and let $s : \mathcal{Z} \to \mathbb{R}_+$ denote a conformity score.
                    Then, classical conformal prediction minimizes the expected prediction set size for any $\mu$ and $\widehat{P}^{Z_{1}}$ that satisfy $s = m\left(\frac{d\widehat{P}^{Z_{1}}}{d\mu}\right)$, where $m : [0, \infty] \to [-\infty, \infty]$ is some strictly monotone function.
                \end{cor}

        \section{Certifying subsequent decisions}\label{sec:certified_decisions}
            In this section, we motivate (fuzzy) prediction sets through the loss bounds they provide to decision-makers who face uncertainty about the outcome of $Z_{n+1}$.
            
            We consider a decision-maker who must select a decision $d$ from a decision space $\mathcal{D}$.
            To express their preferences, we consider a loss function $L_{z} : \mathcal{D} \to \mathbb{R}$.
            Here, they would ideally minimize the `oracle loss' $L_{Z_{n+1}}$ associated with true value of $Z_{n+1}$.
            Unfortunately, the outcome of $Z_{n+1}$ is not yet available at the time of decision making.
            Instead, the decision-maker relies on a (fuzzy) prediction set to inform themselves about $Z_{n+1}$.

            \subsection{Loss bounds from prediction sets}\label{sec:as-if}
                Suppose the decision-maker receives a classical (non-fuzzy) prediction set $C_\alpha^{Z^n}$ for $Z_{n+1}$ that is valid at level $\alpha$ under some model $\mathcal{P}$.
                We may then convert this prediction set into a loss bound $\sup_{z \in \overline{C}_\alpha^{Z^n}} L_z(\delta)$, for any possibly $Z^n$-dependent decision rule $\delta$.

                \begin{prp}\label{prp:minimax}
                    For every $Z^n$-dependent rule $\delta$,
                    \begin{align}\label{ineq:loss_bound}
                        P\left(L_{Z_{n+1}}(\delta) > \sup_{z \in \overline{C}_\alpha^{Z^n}} L_z(\delta)\right) \leq \alpha,\quad \textnormal{for every } P \in \mathcal{P}.
                    \end{align}
                \end{prp}

                The tightest loss bound is obtained by the minimax decision rule
                \begin{align}\label{eq:minimax}
                    \widehat{\delta} \in \argmin_{d \in \mathcal{D}} \sup_{z \in C_\alpha^{Z^n}} L(d, z),
                \end{align}
                assuming such a minimizer exists.
                Loss bounds of this type were recently studied by \citet{andrews2025certified} for classical confidence sets and \citet{kiyani2025decision} for conformal prediction.
                
            \subsection{Loss bounds from fuzzy prediction sets}
                We now show how \emph{fuzzy} prediction sets yield richer loss bounds.
                We consider two types of loss bounds: post-hoc loss bounds and a weighted loss bounds.
                        
                \subsubsection{Post-hoc loss bounds}
                    A downside of decisions based on non-fuzzy prediction sets is that the confidence level $\alpha$ must be prespecified, but may not match the level of certainty desired by the decision-maker.
                    To resolve this, we combine the loss bound \eqref{ineq:loss_bound} with post-hoc validity \eqref{dfn:post-hoc_valid}.
                    
                    To present the resulting bound, recall that a valid fuzzy prediction set $\mathcal{E}^{Z^n}$ is equivalent to a post-hoc valid collection $(\overline{C}_\alpha^{Z^n})_{\alpha > 0}$ of prediction sets $\overline{C}_\alpha^{Z^n} = \left\{z \in \mathcal{Z} : \mathcal{E}^{Z^n}(z) < 1/\alpha\right\}$.
                    For a given $Z^n$-dependent decision $\delta$, this yields a range $(\overline{L}_\alpha(\delta))_{\alpha > 0}$ of loss bounds $\overline{L}_\alpha(\delta) = \sup_{z \in \overline{C}_\alpha^{Z^n}} L_z(\delta)$ over different confidence levels.

                    To the best of our knowledge, we are the first to consider post-hoc loss bounds of this type.
                    The proof follows from the same implication as used in the proof of Proposition \ref{prp:minimax}.

                    \begin{prp}
                        For every $Z^n$-dependent decision rule $\delta$ and data-dependent level $\widetilde{\alpha}$,              
                        \begin{align}\label{ineq:post-hoc_loss_bound}
                            \E_{\widetilde{\alpha}}^P 
                        \left[
                            \frac
                                {P\left(L_{Z_{n+1}}(\delta) \geq \sup_{z \in \overline{C}_{\widetilde{\alpha}}^{Z^n}} L_z(\delta) \mid \widetilde{\alpha}\right)}
                                {\widetilde{\alpha}}\right] 
                        \leq 1, \textnormal{ for every } P \in \mathcal{P}.
                        \end{align}
                    \end{prp}

                    \begin{rmk}[Post-hoc minimax decisions]
                        A concrete way to use the loss bound \eqref{ineq:post-hoc_loss_bound}, is to consider the minimax decision $\widehat{\delta}_\alpha \in \argmin_{d \in \mathcal{D}} \sup_{z \in \overline{C}_\alpha^{Z^n}} L_z(\delta)$ for each confidence level $\alpha > 0$.
                        These produce a range of loss-confidence pairs $(\widehat{L}_\alpha)_{\alpha > 0}$ which the decision-maker can browse to select the desired decision, $\widehat{L}_\alpha := \overline{L}_\alpha(\widehat{\delta}_\alpha)$.
                    \end{rmk}
            
            \subsubsection{Evidence-weighted loss bounds}\label{sec:weighted_loss}
                A downside of loss bounds of the form $\sup_{z \in C_\alpha^{Z^n}} L_z(\delta)$ is that they hinge on a hard set inclusion $z \in C_\alpha^{Z^n}$.
                An alternative approach is to consider weighted loss bound $\sup_{z \in \mathcal{Z}} L_z(\delta) / \mathcal{E}^{Z^n}(z)$, assuming $L_z, \mathcal{E}^{Z^n} > 0$.

                \begin{prp} \label{prop: data-dep}
                    For every $Z^n$-dependent decision rule $\delta$
                    \begin{align}\label{ineq:E-certificate}
                        \E^P\left[\frac{L_{Z_{n+1}}(\delta)}{\sup_{z \in \mathcal{Z}} L_{z}(\delta) / \mathcal{E}^{Z^n}(z)} \right] \leq 1, \textnormal{ for every } P \in \mathcal{P}.
                    \end{align}
                \end{prp}

                \begin{rmk}[Evidence-weighted minimax]
                    The corresponding minimax decision $
                    \delta \in \argmin_{d \in \mathcal{D}} \sup_{z \in \mathcal{Z}} L_{z}(d) / \mathcal{E}^{Z^n}(z)$ downweights values of $z$ against which we have much evidence, as such values are unlikely to become the realization of $Z_{n+1}$.
                    This emphasizes plausible realizations of $Z_{n+1}$.
                \end{rmk}

                Evidence-weighted loss bounds were introduced by \citet{grunwald2023posterior} on a parameter space, and the admissibility of weighted minimax decisions was studied by \citet{andrews2025certified}.
                Our contribution is to develop a predictive analogue.
                
                In Appendix \ref{app:connection}, we show that \eqref{ineq:E-certificate} can be viewed as a generalization of \eqref{ineq:loss_bound}.
       
        \section{Application: image recognition}\label{sec:application}
            We now showcase our methodology in a classical Machine Learning problem: character recognition.
            We use the same data and model as considered by \citet{gauthier2025values}.
            
            In particular, we consider the Federated Extended MNIST (FEMNIST) dataset \citep{caldas2018leaf}.
            In this dataset, $Y_i$ is one of 62 possible characters (a-z, A-Z, 0-9) and $X_i$ is a $28 \times 28$ pixel image of a handwritten version of this character.
            We split the data into a training set (80\%), calibration set (15\%) and test set (5\%), where the percentages are approximate because we put characters by the same writer into the same subset.

            We use the training data to `train' the same LeNet-inspired \citep{lecun1998mnist} neural network as used by \citet{gauthier2025values}, which is an estimator of the kernel $P_*^{Y_{n+1}|X_{n+1}}$.
            We plug this estimator into the likelihood ratio with covariates, as described in Appendix \ref{sec:conformal_covariates}, and subsequently use this to construct several utility-optimal e-values, as described in Section \ref{sec:exchangeable_conformal}.
            The utilities we consider are all special cases of the capped power-utility $U : x \mapsto ((x \wedge 1/\alpha)^h - 1) / h$ framework introduced by \citet{koning2025continuoustestingunifyingtests}, which interpolates between log-optimal e-value ($\alpha = 0$ and $h \to 0$) and Neyman--Pearson-optimal e-value ($\alpha > 0$ and $h = 1$).
            We compute the fuzzy prediction set by taking the calibration set to be $Z^n$, and we select the image of a single observation from the test set to be $X_{n+1}$, so that our fuzzy prediction sets are on the true label $Y_{n+1}$.

            In Figure \ref{fig:fuzzy_FEMNIST}, we plot the resulting fuzzy prediction sets for the label $Y_{n+1}$ of the image $X_{n+1}$ given in Figure \ref{fig:character}.
            Here, each bar represents the amount of evidence (e-value) against every character.
            The plots are sorted from least-to-most evidence, on which they all agree (up to ties) as these utility-optimal e-values are all non-decreasing functions of the same likelihood ratio \citep{koning2025continuoustestingunifyingtests}.
            Here, we see that the parameters $\alpha$ and $h$ are jointly able to change the shape of the fuzzy prediction set quite dramatically.

            The approximate staircase-shape of the capped power-utility plot (bottom-left) makes it a good starting point to explain how to interpret these plots.
            Ignoring the slightly elevated evidence at characters $G$, $b$ and $a$, this fuzzy prediction set can be interpreted as \emph{simultaneously} reporting multiple prediction sets for different data-dependent level certificates $\widetilde{\alpha}$:
            \begin{align*}
                C_{\widetilde{\alpha}}
                    =
                    \begin{cases}
                        \{d\}, & \textnormal{ for } 0.01 \leq \widetilde{\alpha}, \\
                        \{d, 0, G, b\}, & \textnormal{ for } 0.00125 \leq \widetilde{\alpha} < 0.01, \\
                        \{d, 0, G, b, J, a\}, & \textnormal{ for } 0.001 \leq \widetilde{\alpha} < 0.00125, \\
                        \textnormal{all characters}, & \textnormal{ for } \widetilde{\alpha} < 0.001.
                    \end{cases}
            \end{align*}
            We may contrast this to the Neyman--Pearson plot (bottom-right), for which this becomes:
            \begin{align*}
                C_{\widetilde{\alpha}}
                    =
                    \begin{cases}
                        \{d, 0, G, b\}, & \textnormal{ for } 0.001 \leq \widetilde{\alpha}, \\
                        \textnormal{all characters}, & \textnormal{ for } \widetilde{\alpha} < 0.001.
                    \end{cases}
            \end{align*}
            This shows that a fuzzy prediction set yields a much more refined and non-binary expression of the available evidence.
            Comparing the two, a benefit of this fuzzy prediction set is that it reports a prediction set $\{d\}$ at a certificate of level $0.01$, which is unavailable for the Neyman--Pearson prediction set.
            The cost is that $\{d, 0, G, b\}$ only comes with a certificate of $0.00125$, instead of the $0.001$ offered by the Neyman--Pearson variant.
            
            The capped log-utility (top-right) and especially the uncapped log-utility (top-left) show a much wider spectrum of evidence.
            The uncapped log-utility displays strong evidence against characters such as $9$ and $R$, which is not visible in the other fuzzy prediction sets that are capped at $1000$.
            This may be of interest if the subsequent task is to eliminate certain characters from contention, instead of determining the correct character.
            Indeed, the log-utility maximizer reports at least 13 characters for post-hoc significance levels $\widetilde{\alpha} < 0.1$, which does not seem helpful if we desire to identify the `correct' character.
            This suggests that log-utility, which is often pushed as `the right utility function' in the e-value literature, is not necessarily ideal for constructing fuzzy prediction sets.
            At the same time, such behavior may be attractive in different applications, such as a preliminary medical diagnosis to help rule out several implausible diseases.

            An important open question is what kind of utility functions give rise to desirable fuzzy prediction sets for certain subsequent decision tasks.
            For now, the parameters $\alpha$ and $h$ seem like two useful instruments to shape our utility function, and thereby the resulting prediction sets: $\alpha$ caps the maximum evidence that we are interested in, and $h \in [-\infty, 1]$ influences how `risky' or `aggressive' the e-value is.

            \begin{figure*}[t]
                \centering
                \includegraphics[width=\textwidth]{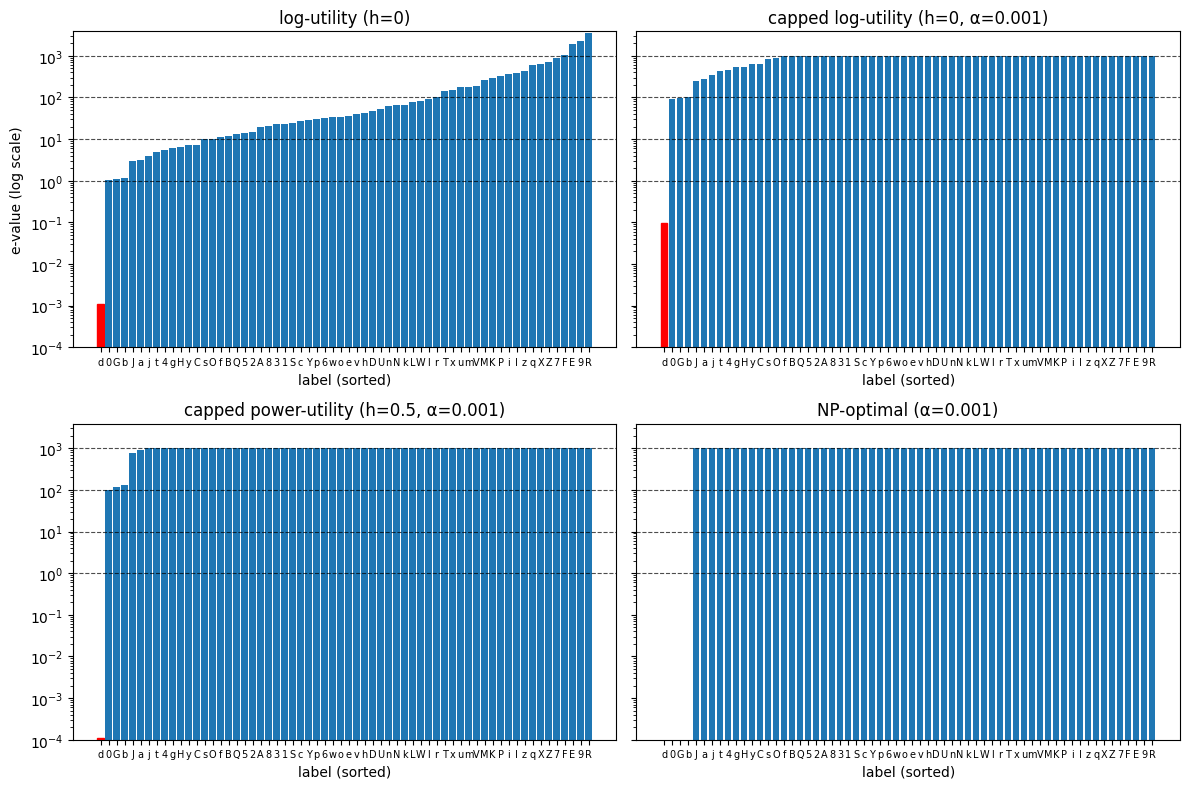}
                \caption{Expected-power-utility-optimal $[0, 1/\alpha]$-valued fuzzy prediction sets over possible labels, for $h = 0$ (log-utility), $h = 1/2$ (power-utility) and $h = 1$ (Neyman--Pearson), for values $\alpha = 0$ (top-left) and $\alpha = 0.001$ (others). The true character label is marked in red (d).}
                \label{fig:fuzzy_FEMNIST}
            \end{figure*}

            \begin{figure}
                \centering
                \includegraphics[width=4cm]{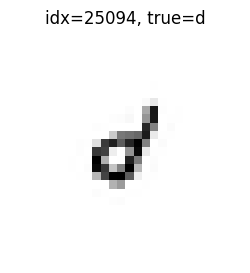}
                \caption{Handwritten character used in the application.}
                \label{fig:character}
            \end{figure}

        \section{Data availability}
            Code to replicate the application may be found at \url{https://github.com/nickwkoning/fuzzy_prediction_sets}.
            The data used in this study are derived from publicly available resources. 
            Specifically, we use the FEMNIST dataset from the LEAF benchmark introduced by \cite{caldas2018leaf}. 
            The dataset is available in the public domain via its official repository: \url{https://github.com/TalwalkarLab/leaf}.

        \section{Acknowledgements}
            We thank Etienne Gauthier, Peter Gr\"unwald, Guneet Dhillon, Yash Nair and Junu Lee for their feedback and discussions.
            We acknowledge the use of ChatGPT-5 for AI-assisted proofreading.
            Nick Koning is supported by a starter grant from the Dutch government.

        {
        \spacingset{1}
        \small
        \bibliographystyle{plainnat}
        \bibliography{Bibliography-MM-MC}
        }
        \appendix
        \clearpage

        \pagenumbering{roman}

        \begin{center}
            {\Large\bf Supplementary Materials for\\[0.3em]
            Fuzzy Prediction Sets: Conformal Prediction with E-values}
        \end{center}
        \vspace{.0em}

        \section{Omitted proofs}\label{sec:proofs}
            \subsection{Proof of Theorem \ref{thm:most_powerful}}
                \begin{proof}
                    This result follows immediately from the fact that we can represent a prediction set $C_\alpha$ as a test $C_\alpha : \mathcal{Z}^{n+1} \to \{0, 1\}$:
                        \begin{align*}
                        &\argmin_{C_\alpha}\E^{P_*^{Z^n}}[\mu^{|Z^n}(C_\alpha^{Z^{n}})] \\
                            &= \argmin_{C_\alpha} \int_{\mathcal{Z}^{n}}\int_{\mathcal{Z}} \mathbb{I}\{Z_{n+1} \in C_\alpha^{Z^n}\}\,d\mu^{|Z^n}\,dP_*^{Z^n}  \\
                            &= \argmin_{C_\alpha} \int_{\mathcal{Z}^{n+1}} (1 - C_\alpha)\,d(P_*^{Z^n} \otimes \mu^{|Z^n}) \\
                            &= \argmax_{C_\alpha} \int_{\mathcal{Z}^{n+1}} C_\alpha \,d(P_*^{Z^n} \otimes \mu^{|Z^n}) \\
                            &= \argmax_{C_\alpha} \E^{Q}[C_\alpha],
                    \end{align*}
                    where the third equality follows from the fact that $\mu^{|Z^n}$ is a probability kernel.
                \end{proof}
                
            \subsection{Proof of Theorem \ref{thm:post-hoc}}\label{proof:post-hoc}
                 \begin{proof}
                    For each $\overline{C}_\alpha$, \eqref{eq:sublevel_sets} reveals that the smallest data-dependent level $\widetilde{\alpha}$ choice for which $Z_{n+1}$ is excluded is $1/\mathcal{E}^{Z^n}(Z_{n+1})$.
                    This implies, 
                    \begin{align*}
                        \frac{\mathbb{I}\{Z_{n+1} \not\in \overline{C}_{\widetilde{\alpha}}^{Z^n}\}}{\widetilde{\alpha}}
                            \leq \frac{\mathbb{I}\{Z_{n+1} \not\in \overline{C}_{1/\mathcal{E}^{Z^n}(Z_{n+1})}\}}{1/\mathcal{E}^{Z^n}(Z_{n+1})}
                            = \mathcal{E}^{Z^n}(Z_{n+1}),
                    \end{align*}
                    for every $\widetilde{\alpha}$.
                    As a consequence,
                    \begin{align*}
                         \E_{\widetilde{\alpha}}^P&\left[\frac{P(Z_{n+1} \not\in \overline{C}_{\widetilde{\alpha}}^{Z^n} \mid \widetilde{\alpha})}{\widetilde{\alpha}}\right] \\
                            &= \E_{\widetilde{\alpha}}^P\left[\E^P\left[\frac{\mathbb{I}\{Z_{n+1} \not\in \overline{C}_{\widetilde{\alpha}}^{Z^n} \}}{\widetilde{\alpha}}\middle| \widetilde{\alpha}\right]\right] \\
                            &= \E^P\left[\frac{\mathbb{I}\{Z_{n+1} \not\in \overline{C}_{\widetilde{\alpha}}^{Z^n} \}}{\widetilde{\alpha}}\right]
                            \leq \E^P\left[\mathcal{E}^{Z^n}(Z_{n+1})\right] \leq 1.
                    \end{align*}
                \end{proof}
                
            \subsection{Proof of Proposition 
            \ref{prp:conditional_LR}}\label{proof:conditional_alternative}
                To prove Proposition \ref{prp:conditional_LR}, we first prove the following lemma.
                
                \begin{lem}
                    If $Z^n$ is exchangeable under $Q$ then
                    \begin{align}
                        Q^{Z^n | O, Z_{n+1}} = \textnormal{UnifWR}([Z^{n+1}] \setminus \{Z_{n+1}\}),
                    \end{align}
                    where with $\textnormal{UnifWR}([Z^{n+1}] \setminus \{Z_{n+1}\})$ we mean uniformly sampled without replacement from the tuple $[Z^{n+1}]$ with $Z_{n+1}$ removed.
                \end{lem}
                \begin{proof}
                    By assumption, $Z^{n}$ is exchangeable.
                    Exchangeability of $Z^n$ means that the distribution of $Z^{n+1}$ is invariant under permutations in $\Pi(n)$, which is equivalent to uniformity on each $\Pi(n)$-orbit of $\mathcal{Z}^{n+1}$, as $\Pi(n)$ is a compact group.
                    Each $\Pi(n)$-orbit of $\mathcal{Z}^{n+1}$ is simply the subset of a $\Pi(n+1)$-orbit $O \in \mathcal{O}^{n+1}$ in which the $(n+1)$th observation is fixed.
                    In fact, each $\Pi(n+1)$-orbit may be partitioned into $\Pi(n)$-orbits as subsets, each differing by the $(n+1)$th element.
                    Hence, conditionally on $O \in \mathcal{O}^{n+1}$, each conditional distribution $Q^{|O}$ may be viewed as first sampling this $(n+1)$th element using \emph{some} distributions (and with it the $\Pi(n)$-orbit), and subsequently sampling uniformly from the $\Pi(n)$-orbit that contains this element.
                \end{proof}

                \begin{proof}[Proof of Proposition \ref{prp:conditional_LR}]
                    We have
                    \begin{align*}
                        &\textnormal{LR}^{|O}(z^{n+1})
                            = \frac{dQ^{|O}}{d\textnormal{Unif}(O)}(z^{n+1}) \\
                            &= \frac{d(Q^{Z_{n+1}|O} \otimes \textnormal{UnifWR}([Z^{n+1}] \setminus \{Z_{n+1}\}) )}{d(\textnormal{Unif}[Z^{n+1}] \otimes \textnormal{UnifWR}([Z^{n+1}] \setminus \{Z_{n+1}\}))}(z^{n+1}) \\
                            &= \frac{dQ^{Z_{n+1}|O}}{d\textnormal{Unif}[Z^{n+1}]}(z_{n+1}) 
                            = (n + 1) \times Q^{Z_{n+1} | O}(z_{n+1}) \\
                            &=: \textnormal{LR}^{Z_{n+1}|O}(z_{n+1}).
                    \end{align*}
                \end{proof}
                
            \subsection{Proof of Proposition \ref{prp:conditional_LR_indep}}\label{proof:iid}
                \begin{proof}
                    By assumption, $Q = Q_1 \times Q_1 \times \cdots \times Q_1 \times Q_{n+1}$.
                    Define $Q^{(i)}$ be this product measure, but with $Q_{n+1}$ in the $i$th position, and $Q_1$ in the remaining positions.
                    By the structure of $Q$,
                    \begin{align*}
                        \overline{Q}
                            = \frac{1}{|\Pi(n+1)|}\sum_{\pi \in \Pi(n+1)} \pi Q
                            = \frac{1}{n+1} \sum_{i=1}^{n+1} Q^{(i)}.
                    \end{align*}

                    By Proposition 3 in \citet{koning2023post}, the conditional likelihood ratio is the restriction of
                    \begin{align*}
                        \frac{dQ}{d\overline{Q}}
                    \end{align*}
                    to $O$.
                    By definition of $\overline{Q}$, we have, for every event $A$,
                    \begin{align*}
                        \overline{Q}(A)
                            &= \frac{1}{n + 1} \sum_{i=1}^{n+1} Q^{(i)}(A) \\
                            &= \frac{1}{n + 1} \sum_{i=1}^{n+1} \int_{A} \frac{dQ^{(i)}}{dQ}\,dQ \\
                            &= \int_{A} \left(\frac{1}{n + 1} \sum_{i=1}^{n+1}  \frac{dQ^{(i)}}{dQ}\right)\,dQ,
                    \end{align*}
                    so that
                    \begin{align*}
                        \frac{d\overline{Q}}{dQ}
                            = \frac{1}{n + 1} \sum_{i=1}^{n+1}  \frac{dQ^{(i)}}{dQ}.
                    \end{align*}
                    Taking the reciprocal gives
                    \begin{align}\label{eq:average_RN}
                        \frac{dQ}{d\overline{Q}}
                            = \frac{1}{\frac{1}{n + 1} \sum_{i=1}^{n+1}  \frac{dQ^{(i)}}{dQ}}.
                    \end{align}
                    Next, by definition of $Q$ and $Q^{(i)}$, we have
                    \begin{align*}
                        \frac{dQ^{(i)}}{dQ}(z^{n+1})
                            &= \frac{\tfrac{dQ_{n+1}}{dQ_1}(z_i)}{\tfrac{dQ_{n+1}}{dQ_1}(z_{n+1})}
                    \end{align*}
                    Substituting this into \eqref{eq:average_RN} and rewriting yields
                    \begin{align*}
                        \frac{dQ}{d\overline{Q}}(z^{n+1})
                            = \frac{\tfrac{dQ_{n+1}}{dQ_1}(z_{n+1})}{\frac{1}{n + 1} \sum_{i=1}^{n+1} \tfrac{dQ_{n+1}}{dQ_1}(z_i)}.
                    \end{align*}
                \end{proof}    
                \subsection{Proof of Proposition \ref{prp:minimax}}
                \begin{proof}
                    We have the implication $L_{Z_{n+1}}(\delta) > \sup_{z \in C_\alpha^{Z^n}} L_z(\delta) \implies Z_{n+1} \not\in C_\alpha^{Z^n}$, so that $ P\left(L_{Z_{n+1}}(\delta) > \sup_{z \in C_\alpha^{Z^n}} L_z(\delta)\right) \leq P\left(Z_{n+1} \not\in C_\alpha^{Z^n}\right) \leq \alpha$.
                \end{proof}
            \subsection{Proof of Proposition \ref{prop: data-dep}}
            \begin{proof}
                    We have $\sup_{z \in \mathcal{Z}} L_{z}(\delta) / \mathcal{E}^{Z^n}(z) \geq L_{Z_{n+1}}(\delta) / \mathcal{E}^{Z^n}(Z_{n+1})$, so that 
                    \begin{align*}
                        \frac{L_{Z_{n+1}}(\delta)}{\sup_{z \in \mathcal{Z}} L_{z}(\delta) / \mathcal{E}^{Z^n}(z)} \leq \frac{L_{Z_{n+1}}(\delta)}{L_{Z_{n+1}}(\delta) / \mathcal{E}^{Z^n}(Z_{n+1})} 
                        = \mathcal{E}^{Z^n}(Z_{n+1}).
                    \end{align*}
                    The result then follows from the validity of $\mathcal{E}^{Z^n}$.
                \end{proof}
                 \newpage

                \begin{figure*}[h!]
                    \centering
                    \includegraphics[width=0.48\textwidth]{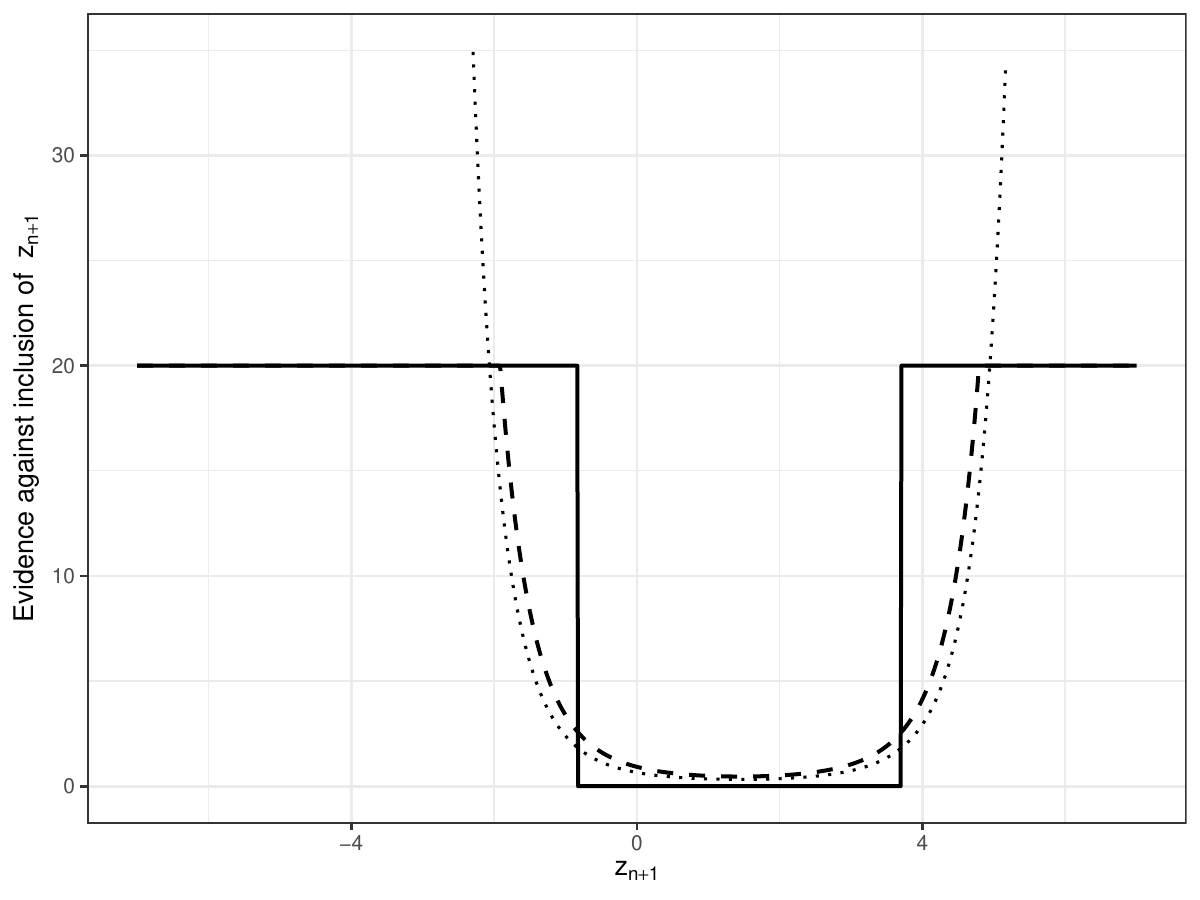}
                    \hfill
                    \includegraphics[width=0.48\textwidth]{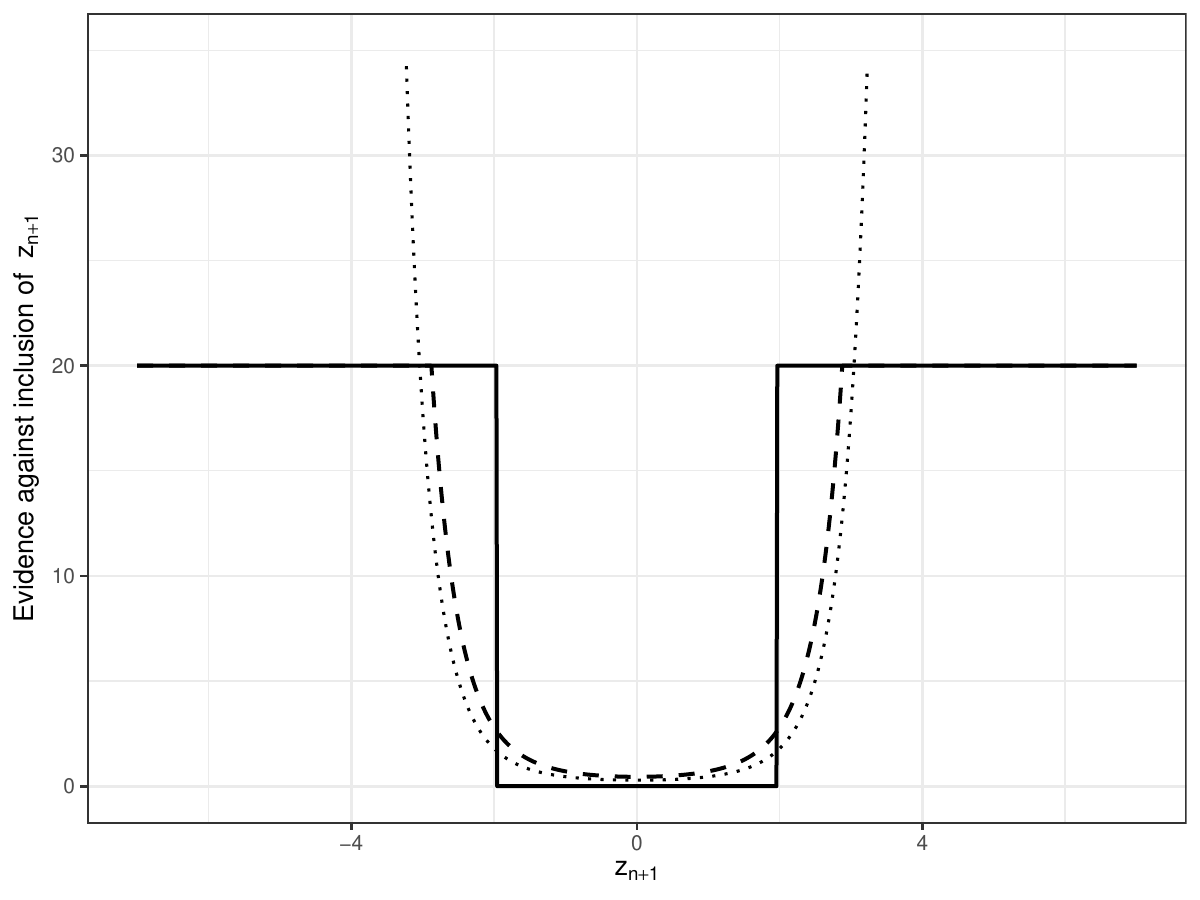}
                    \caption{Optimal fuzzy prediction sets for Neyman--Pearson-utility (solid), log-utility (dotted) and bounded log-utility (dashed) under the simple Gaussian setting (left) and composite Gaussian setting (right) from Example \ref{exm:simple_gaussian_NP}, \ref{exm:composite_gaussian_NP} and \ref{exm:fuzzy_gaussian}, for $n = 3$, $\mu = 0$, $\sigma = 1$, $\tau = 3.5$, $\bar{Z}_n = 1.44$. Here, the Neyman--Pearson utility and bounded log-utility are both bounded at $1/0.05 = 20$.}
                    \label{fig:gaussians}
                \end{figure*}
                \section{Example Fuzzy Gaussian}\label{sec:ex_fuzzy}
                \begin{exm}[Fuzzy Gaussian]\label{exm:fuzzy_gaussian}
                    In the examples of Section \ref{sec:examples}, we implicitly considered optimal fuzzy prediction sets under the classical Neyman--Pearson utility $x \mapsto x \wedge 1/\alpha$.
                    We continue these examples under more general utility functions.

                    We first consider the log-utility version of Example \ref{exm:simple_gaussian_NP}, where we had the simple model $\mathcal{P} = \{\mathcal{N}(\mu 1_{n+1}, \sigma^2 I_{n+1})\}$ and optimized against the alternative $Q = \mathcal{N}(\mu 1_{n}, \sigma^2 I_{n}) \times \mathcal{N}(\mu, \tau^2)$, for known $\mu \in \mathbb{R}$ and $\tau > \sigma > 0$.
                    Here, we could choose $\tau \to \infty$ to mimic the Lebesgue measure, as in the preceding examples, but we find that this generally yields undesirable fuzzy prediction sets.
                    
                    It is well-known that the log-utility-optimal e-value is the likelihood ratio itself.
                    Hence, the log-utility-optimal e-value here is simply the likelihood ratio
                    \begin{align}\label{eq:gaussian_LR_simple}
                        \mathcal{E}(z^{n+1})
                            = \frac{d\mathcal{N}(\mu, \tau^2)}{d\mathcal{N}(\mu, \sigma^2)}(z_{n+1})
                            = \mathcal{E}^{z^{n}}(z_{n+1}).
                    \end{align}
                    For $1/\alpha$-bounded log-utility $U : x \mapsto \log(\mathcal{E} \wedge 1/\alpha)$, the optimal e-value is the likelihood ratio capped at $1/\alpha$ and `boosted' by some constant $b_\alpha$ so that it has expectation exactly 1 \citep{koning2025continuoustestingunifyingtests}:
                    \begin{align*}
                        \mathcal{E}^{z^{n}}(z_{n+1})
                            = \left(b_\alpha \frac{d\mathcal{N}(\mu, \tau^2)}{d\mathcal{N}(\mu, \sigma^2)}(z_{n+1})\right) \wedge 1/\alpha.
                    \end{align*}
                    
                    In the left panel of Figure \ref{fig:gaussians}, we display these fuzzy prediction sets next to the traditional prediction set from Example \ref{exm:simple_gaussian_NP}.

                    For the composite setting introduced in Example \ref{exm:composite_gaussian_NP}, we had derived the likelihood ratio
                    \begin{align*}
                        \textnormal{LR}(A, B)
                            = \frac{d\mathcal{N}(0, \tau^2 + \sigma^2/n)}{d\mathcal{N}(0, \sigma^2 + \sigma^2/n)}(B),
                    \end{align*}
                    where $A = Z^n - \bar{Z}_n$ and $B = Z_{n+1} - \bar{Z}_n$.
                    Fixing $\bar{Z}_n$ and evaluating this likelihood ratio at plug-in values $z_{n+1} \in \mathbb{R}$ for $Z_{n+1}$, we recover the log-optimal fuzzy prediction set:
                    \begin{align}\label{eq:gaussian_LR_composite}
                        \mathcal{E}^{Z^n}(z_{n+1})
                            = \frac{d\mathcal{N}(\bar{Z}_n, \tau^2 + \sigma^2/n)}{d\mathcal{N}(\bar{Z}_n, \sigma^2 + \sigma^2/n)}(z_{n+1}),
                    \end{align}
                    which relies on a variant of the Hunt-Stein theorem derived for log-optimal e-values proven by \citet{perez2022statistics}.
                    To the best of our knowledge, this has not been proven yet for general expected-utility-optimal e-values, but we would be surprised if it does not go through in general.
                    
                    Comparing the fuzzy prediction set in \eqref{eq:gaussian_LR_composite} to \eqref{eq:gaussian_LR_simple}, we see that this likelihood ratio is centered at $\bar{Z}_n$ instead of $\mu$ and the variance in both numerator and denominator is inflated by an additional $\sigma^2/n$-term due to the estimation of $\mu$ by $\bar{Z}_n$.
                    We illustrate this fuzzy prediction set in the right panel of Figure \ref{fig:gaussians}, along its bounded version and the Neyman--Pearson-utility variant from Example \ref{exm:composite_gaussian_NP}.
                \end{exm}
                
            \section{Clipping, capping and dampening}\label{sec:clip}
                In this section, we cover some practical tools for designing utility functions.
            
                For subsequent decision making, as treated in Section \ref{sec:certified_decisions}, it may be desirable to place a lower bound $0 < b \leq 1$ on our e-values, such as $b = 0.01$ or $b = 0.1$. 
                This prevents settings in which a single zero-valued e-value may come to dominate subsequent decision making due to the minimax nature of the decisions.
                The log-optimal e-value with such a lower bound is
                \begin{align}\label{eq:clip}
                    \mathcal{E}(z^{n+1}) = \left(\lambda \textnormal{LR}^{Z_{n+1}|O(z^{n+1})}(z_{n+1})\right) \vee b,
                \end{align}
                where $\lambda$ is some normalization constant that ensures $\mathcal{E}$ is an exact e-value.
                This may be interpreted as `clipping' the likelihood ratio from below, and simultaneously shrinking it by $\lambda$ to ensure it remains a valid e-value.
                Alternatively, it can be viewed as choosing a utility function with $U(x) = -\infty$ on $[0, b)$.

                Analogously, in case we are not interested in evidence beyond some value $c \geq 0$, we may also \emph{cap} our e-values below some upper bound $c \geq 1$.
                The resulting optimizer is analogous to \eqref{eq:clip}, with $\vee b$ is replaced by $\wedge c$ \citep{koning2025continuoustestingunifyingtests}.
                Capping may also be incorporated in the utility function, and appears in the Neyman--Pearson utility function $U(x) = x \wedge 1/\alpha$.

                An alternative strategy to `dampen' e-values is introduced by \citet{grunwald2023posterior}.
                He proposes to take an e-value $\mathcal{E}$ and dampen it to the e-value $b + (1 - b)\mathcal{E}$ that may be interpreted as a $b$-mixture of the e-value and the constant 1.
                If $\mathcal{E}$ is the log-utility maximizer, then we find $b + (1 - b)\mathcal{E}$ implicitly maximizes the utility function $U(\mathcal{E}) = \log([\mathcal{E} - b] \vee 0)$.       
                
         \section{Connecting as-if and weighted decisions}\label{app:connection}
                While the weighted loss approach for fuzzy prediction sets in Section \ref {sec:weighted_loss} and the as-if decision approach from Section \ref{sec:as-if} may seem distinct, we show that the as-if approach may be viewed as a special limiting case of the weighted loss approach.
                This relationship was not yet established before; \citet{andrews2025certified} describe both approaches as distinct paradigms.
                The connection is not obvious, as it relies on our observation that the E-posterior coined by \citet{grunwald2023posterior} is equivalent to a fuzzy confidence set, and so a generalization of a confidence set.
    
                Let $\gamma \in (0, 1)$, and define the following fuzzy prediction set for every $z_{n+1} \in \mathcal{Z}$:
                \begin{equation}
                    \mathcal{E}^{Z^n}(z_{n+1})
                        := \gamma \frac{1}{\alpha}\mathbb{I}\{z_{n+1} \notin \mathcal{C}_\alpha^{Z^n}\} + (1-\gamma) \mathbb{I}\{z_{n+1} \in \mathcal{C}_\alpha^{Z^n}\}.
                \end{equation}
                Note that $\mathcal{E}$ converges to a non-fuzzy prediction set when $\gamma \to 1$.
                Moreover, this is indeed an e-value because it is dominated by the mixture $\gamma\frac{1}{\alpha}\mathbb{I}\{Z_{n+1}\notin \mathcal{C}_\alpha^{Z^n}\} + (1-\gamma)$ of a valid e-value $\frac{1}{\alpha}\mathbb{I}\{Z_{n+1} \notin \mathcal{C}_\alpha^{Z^n}\}$ and the constant 1, and a mixture of valid e-values is itself a valid e-value.
    
                Plugging this into definition of the risk bound for a given decision $d$ yields
                \begin{align*}
                    &\sup_{z} \frac{L(d, z)}{\mathcal{E}^{Z^n}(z)} 
                        = \sup_{z} \frac{L(d, z)}{\gamma \frac{1}{\alpha}\mathbb{I}\{z \notin \mathcal{C}_\alpha^{Z^n}\}+ (1-\gamma) \mathbb{I}\{z \in \mathcal{C}_\alpha^{Z^n}\}} \\
                        &= \sup_{z} \left\{\frac{\alpha}{\gamma}\mathbb{I}\{z \not\in \mathcal{C}_\alpha^{Z^n}\} L(d,z) + \frac{1}{1-\gamma}\mathbb{I}\{z \in \mathcal{C}_\alpha^{Z^n}\} L(d,z)\right\}.
                \end{align*}
    
                Now, assuming $\mathcal{C}_\alpha^{Z^n}$ is non-empty and $L(d, z)$ is bounded, we have for $\gamma$ sufficiently close to 1 that this equals
                \begin{align*}
                     \sup_{z} \frac{1}{1-\gamma}\mathbb{I}\{z \in \mathcal{C}_\alpha^{Z^n}\} L(d,z)
                        = \frac{1}{1-\gamma} \sup_{z \in \mathcal{C}_\alpha^{Z^n}} L(d, z),
                \end{align*}
                which is proportional to the risk function we minimize for non-fuzzy prediction sets in \eqref{eq:minimax}.
                This shows that the non-fuzzy `as-if' decision can indeed be viewed as a limiting case of the fuzzy procedure.
                
     \section{Covariates}\label{sec:covariates}
            In many applications, the observations $Z^{n+1}$ are decomposed into $Z^{n+1} = (X^{n+1}, Y^{n+1})$, where the outcomes $Y^{n+1}$ come with covariates $X^{n+1}$.
            In such a setting, the covariate $X_{n+1}$ is observed alongside $Z^n = (X^n, Y^n)$, and we are to construct a prediction set for $Y_{n+1}$.
            In this section, we show how such covariates are easily incorporated into our framework.
            We only cover the non-fuzzy setting, as fuzzy prediction sets may be derived analogously.

            When adding covariates, our prediction set $C_\alpha^{(Z^n, X_{n+1})}$ now also depends on $X_{n+1}$.
            The coverage guarantee is also marginal over the covariates:
            \begin{align*}
                P(Y_{n+1} \in C_\alpha^{(Z^n, X_{n+1})}) \geq 1 - \alpha,
            \end{align*}
            for every $P \in \mathcal{P}$.

            Following the discussion in Section \ref{sec:slicing}, we may still construct a prediction set $C_\alpha$ for $Z^{n+1}$, and subsequently slice it at the realization $(Z^n, X_{n+1}) = (z^n, x_{n+1})$ to obtain our prediction set $C_\alpha^{(z^n, x_{n+1})}$ for $Y_{n+1}$:
            \begin{align*}
                C_\alpha^{(z^n, x_{n+1})}
                    := \{y \in \mathcal{Y} : C_\alpha(z^n, (x_{n+1}, y)) = 0\}.
            \end{align*}
            
            The optimality results remain the same, but we now maximize the power under the distribution
            \begin{align*}
                Q = P_*^{Z^n, X_{n+1}} \otimes \mu^{|(Z^n, X_{n+1})}.
            \end{align*}

        \subsection{Conformal prediction with covariates}\label{sec:conformal_covariates}
            In the application to the conformal setting, the only thing that changes is that we may use some additional structure in $Q$.
            In particular, we must replace $Q^{Z_{n+1}} = \mu^{|Z^n}$ by $Q^{Z_{n+1}} = P_*^{X_{n+1}|Z^n} \otimes \mu^{|(X_{n+1}, Z^n)}$

            For example, the i.i.d. result in Proposition \ref{prp:conditional_LR_indep} now specializes to
            \begin{align*}
                \textnormal{LR}^{|O(z^{n+1})}(z^{n+1})
                    &\propto \frac{dQ^{Z_{n+1}}}{dQ^{Z_1}}(z^{n+1})
                    = \frac{d(P_*^{X_{n+1}} \otimes \mu^{|X_{n+1}})}{dP_*^{X_{n+1}, Y_{n+1}}}(z_{n+1})
                    = \frac{dP_*^{X_{n+1}} \otimes d\mu^{|X_{n+1}}}{dP_*^{X_{n+1}} \otimes  dP_*^{Y_{n+1}|X_{n+1}}}(z_{n+1}) \\
                    &= \frac{d\mu^{|X_{n+1}}}{dP_*^{Y_{n+1}|X_{n+1}}}(y_{n+1}),
            \end{align*}
            where the normalization constant is
            \begin{align*}
                \frac{1}{(n+1)} \sum_{i = 1}^{n+1} \frac{d\mu^{|X_{i}}}{dP_*^{Y_{n+1}|X_{i}}}(y_i).
            \end{align*}
            Here, we may write $P_*^{Y_{n+1}|X_{i}}$ instead of $P_*^{Y_{i}|X_{i}}$ due to the i.i.d. assumption.

            To apply this, we may plug-in some estimator $\widehat{P}^{Y_{n+1}|X_{n+1}}$, for the kernel $P_*^{Y_{n+1}|X_{n+1}}$, or several estimators and average the resulting e-values.
            We use this in Section \ref{sec:application}, where we estimate this kernel through a neural network.
      
                \end{document}